\begin{document}
	
\newtheorem{Theorem}{Theorem}[section]
\newtheorem{Corollary}[Theorem]{Corollary}
\newtheorem{Proposition}[Theorem]{Proposition}
\newtheorem{Lemma}[Theorem]{Lemma}
\newtheorem{Assumptions}[Theorem]{Assumptions}
\newtheorem{definition}{Definition}[section]

\theoremstyle{definition}
\newtheorem{Remark}[Theorem]{Remark}
\newtheorem{example}[Theorem]{Example}


\title{{\bf Generalized exponentially bounded integrated semigroups }}

\author{
Marko  Kosti\'c\thanks{Faculty of Technical Sciences, University of Novi Sad, Serbia,  marco.s@verat.net} ,
Stevan Pilipovi\'c\thanks{Faculty of Sciences, University of Novi Sad, Serbia,  stevan.pilipovic@dmi.uns.ac.rs} ,
Milica \v Zigi\'c\thanks{Faculty of Sciences, University of Novi Sad, Serbia, milica.zigic@dmi.uns.ac.rs}
	} 
\maketitle
\date{}

\bigskip

\begin{abstract}
The main subject of this paper is the analysis of  sequences of exponentially bounded integrated semigroups which are related to Cauchy problems
\begin{equation}\label{jed}
\frac{\partial}{\partial t}u(t,x)-a(D)u(t,x)=f(t,x), \quad u(0,x)=u_0(x), \quad t\geq 0, \  x\in \mathbb R^d,
\end{equation}
with a distributional initial data $u_0$ and a distributional right hand side $f$ through a sequence of equations with regularized $u_0$ and $f$ and a sequence of (pseudo) differential operators $a_n(D)$ instead of $a(D)$. Comparison of sequences of infinitesimal generators and the determination of corresponding sequences of integrated semigroups are the main subject of the paper. For this purpose, we introduce association, the relation of equivalence for  infinitesimal generators on one side and the corresponding relations of equivalence of integrated semigroups on another side.  The order of involved assumptions on generators essentially characterize the mutual dependence of sequences of infinitesimal generators and the corresponding sequences of integrated semigroups.
\end{abstract}



\maketitle

\section{Introduction}
This paper aims to provide an approach to the sequences of infinitesimal generators and the corresponding sequences of integrated semigroups, usually obtained through a process of regularization,  as a framework for solving singular Cauchy problems within spaces of generalized functions.

General theory of integrated semigroups, introduced by Arendt \cite{A 1987}, was  already  stated in a large number of monographs.  We refer to a fundamental monograph \cite{ABHN 2011}  and references therein  for the historical background. For the applications, especially  in population biology and population persistence, we refer to   \cite{Tim2} and  \cite{MR}. The authors of quoted monographs are the leading ones in the field with a plenty of strong papers which can be found in the bibliography of these monographs. Actually, contributions to the theory of integrated semigroups were given by many excellent  papers which can be easily found.  We do not mention them because it is very likely that many important works will be omitted. Here we mention that one of coauthors has written several papers and monographs related to various kinds of semigroups. We refer to \cite{MK 11}, \cite{MK 15} and references therein.

Concerning generalized  $C_0$-semigroups through regularization, we refer to  \cite{NPR},  where,  in the frame of Colombeau theory \cite{C1984} of generalized function algebras, were discussed relations between nets of infinitesimal generators and the corresponding nets of $C_0$-semigroups, with applications to a certain class of nonlinear wave equations. In relation to \cite{NPR}, our approach in this paper is different; instead of nets, we simplify the exposition using sequences as a main tool and instead of technically more complex definitions of Colombeau theory we directly introduce sequences of solutions of regularized Cauchy problems with strong singularities.  Roughly speaking, our results   are related to approximations of infinitesimal generators and the corresponding integrated semigroups, through the sequences of such operators and integrated semigroups. 

We only deal with one time integrated semigroups although it is clear that  the paper can be extended to $k$-times integrated semigroups. In this way, we avoid the distributional semigroups for which we know that every one of them is a $k$-times integrated semigroup for a certain $k$ (cf. \cite{ku}, \cite{w}). Our approach is motivated by revisiting well known results for one time integrated semigroups which correspond to infinitesimal generators given as Fourier multipliers with symbols of the class $S^m_{1,0},$ $m\in\mathbb{N},$ on the Lebesgue space $L^p(\mathbb R^d),$ $p\geq1$ and those which correspond to symbols $i|\xi|^m, m\in \mathbb N$ given in  Section 8.3 of \cite{ABHN 2011}, see also \cite{H1}, \cite{H2}.  

In the main part of the paper we  analyse the relations between sequences of infinitesimal generators and corresponding sequences of integrated  semigroups in a sense that a certain perturbation of a sequence of infinitesimal generators  results by a perturbation of the corresponding sequence of integrated semigroups which we estimate and classify.   This is done by the introduction of associated sequences, as in the algebraic theory of generalized functions, cf. \cite{C1984}, \cite{MMMR}. 

The paper is organized as follows. Notation is the standard one  for the real  numbers, as well as for the Lebesgue $L^p$-spaces, Schwartz test function and distribution spaces. In Introduction, Subsection \ref{ss not}, we introduce sequence spaces over the pivot Banach space $X$ as a framework for the further investigations. The moderate growth of involved sequences is the essential assumption of all sequence spaces considered in the paper. Section \ref{s 2} is related to sequences of closed linear operators defined on $X$ (Subsection \ref{ss sog}) since they are under suitable conditions infinitesimal generators  of sequences of  exponentially bounded integrated semigroups (Subsection  \ref{ss geis}). For this purpose we impose additional conditions on operators and call them sequences of  infinitesimal generators. In Section \ref{examples}, we revisit some examples presented in the monograph \cite{ABHN 2011}, given in Section 8.3, related to a class of pseudo-differential operators. This section illustrates just a few possibilities for the applications in finding a solution to \eqref{jed}, in the form of a sequence,  although  the singular equation from which we started does not have a solution in a classical analysis setting.  If a problem \eqref{jed} has a classical solution, one must have that obtained sequence of solutions, also called very weak solution (cf.  \cite{GR}, \cite{R}), converges to this solution in the same setting.  Essential interest  is to find out whether a very weak solution has a subsequence which converges in the  sense of distributions, that is in the weak sense.  Usually, this is a weak solution to (\ref{jed}). 

Our approach is exposed in Section \ref{s 4} where the relations between sequences of infinitesimal generators and corresponding sequences of exponentially bounded integrated semigroups are discussed.  This is achieved by analysing associated sequences, the ones for which the norm of their difference tends to zero.  In this way, we introduce relations of equivalences in the corresponding spaces of moderate sequences given in Section \ref{s 2} and in Section \ref{s 4}.  A classical result on perturbation given in \cite{KH} fits well in the given approach.   

\subsection{Notation}
\label{ss not}

Let $(X,\|\cdot\|_X)$ be a Banach space and $(\mathcal{L}(X),\|\cdot\|_{\mathcal{L}(X)})$ be a space of linear continuous mappings on $X,$ with values in $X.$ For a sequence $ (x_n)_n\in X^{\mathbb N}$, $\mathbb N$ is the set of natural numbers, we say that it is moderate, and write $ (x_n)_n\in \mathcal{E}_X^M,$ if there exists $a\in\mathbb R$ such that $\|x_n\|_X=\mathcal{O}(n^{a}),$ which means $\|x_n\|_X\leq C n^a,$ $n>n_0$ for some $C>0.$ With $\mathcal{L}(X)$ instead of $X,$ we define $\mathcal{E}_{\mathcal{L}(X)}^M$. Note that $\mathcal{L}(X)$ is considered as a Banach algebra with respect to the operation of composition. Directly from the definition, one can deduce the following: $(R_n)_n\in \mathcal{E}_{\mathcal{L}(X)}^M$ if and only if there exists $(M_n)_n\in \mathcal{E}^M_{\mathbb{R}}$ such that $\|R_n x\|_X\leq M_n \|x\|_X,$ $x\in X$. 

Denote by $\mathcal{C}^M([0,\infty);X)$ the space of vector valued sequences  of  continuous mappings $F_n:[0,\infty)\ni t\mapsto F_n(t)\in X,$ $n\in\mathbb{N},$  with the property
\begin{equation}\label{SX_M} 
(\exists a\in\mathbb R)\quad \sup_{t\geq 0} \|F_n (t)\|_{X}=\mathcal{O}(n^{a}),\quad n\to \infty.
\end{equation} 
We will consider the  case when $X=L^p(\mathbb R^d),$ $p\in(1,\infty),$ but our special interest is the case  when we have above  $\mathcal L(X)$ instead of $X$. Then one obtains the space of sequences $(S_n)_n$ of strongly continuous mappings $S_n:[0,\infty)\to (\mathcal{L}(X),\|\cdot\|_{\mathcal{L}(X)}),$ $n\in\mathbb{N},$ denoted by $\mathcal{C}^M([0,\infty);\mathcal{L}(X)).$ The introduced sequences will be also denoted by $(S_n(t))_n,$ $t\geq 0,$ to emphasize the role of $t.$ Clearly, $\mathcal{C}^M([0,\infty);\mathcal{L}(X))$ is an algebra under composition.

The space of sequences  of continuous mappings $F_n:[0,\infty)\to (X,\|\cdot\|_{X}),$  with the property $\sup_{t\geq 0} \|e^{-\omega t} F_n (t)\|_{X}=\mathcal{O}(n^{a}),$ $n\to \infty, $  for some $ \omega>0$  and some $a\in\mathbb R,$ is denoted by $\mathcal{C}^M_{\rm exp}([0,\infty);X).$ It is also an algebra.  Again, we emphasize the case when $\mathcal{L}(X)$ is instead of $X,$ and write $(S_n)_n\in \mathcal{C}^M_{\rm exp}([0,\infty);\mathcal{L}(X))$ if
\begin{equation}\label{exp1}
\sup_{t\geq 0} \|e^{-\omega t} S_n (t)\|_{\mathcal{L}(X)}=\mathcal{O}(n^{a}),\; n\to \infty, \mbox{ for some } \omega>0 \mbox{ and some } a\in\mathbb R.
\end{equation} 
Note that for every $(S_n)_n\in \mathcal{C}^M_{\rm exp}([0,\infty);\mathcal{L}(X)) $ and every $t_0\in [0,\infty)$ we have  $(S_n(t_0))_n\in \mathcal{E}^M_{\mathcal{L}(X)}.$

Let us note that for all sequences under consideration we have to assume that their properties hold for $n>n_0$ since only the behaviour, as $n\rightarrow \infty,$ is important. In the sequel, we will not explicitly point out this fact and just assume that a certain property holds for every $n\in\mathbb{N}.$ Actually, it is not a restriction since we can always change the first $n_0$ elements by the $(n_0+1)$-th element.

\section{Generalized exponentially bounded integrated semigroups}
\label{s 2}

\subsection{Sequences of generators}
\label{ss sog}

Let $(A_n)_n$ be a sequence of closed linear operators acting on $X$ and $D_{A_n}$ be a domain for $A_n,$ $n\in\mathbb{N}.$  Let $(R(\lambda,A_n))_n$ be a sequence of resolvents that corresponds to  $(A_n)_n$ and $\rho(A_n),$ $n\in\mathbb{N},$ be their resolvent sets.  Assume:
\begin{enumerate}
\item[(G1):] There exists $D\subset X,$ $D\neq \emptyset,$ such that $D_{A_n}=D,$ $n\in\mathbb{N}.$
\item[(G2):] There exists $\omega>0$ such that $(\omega,\infty)\subset\rho(A_n),$ $n\in\mathbb{N}.$ 
\item[(G3):] $(R(\lambda,A_n))_n\in \mathcal{E}^M_{\mathcal{L}(X)},$ $\lambda\in (\omega,\infty).$
\end{enumerate}
Denote by $\mathcal{A}^{M}_3$ the set of sequences which satisfy (G1) - (G3).  We call $(A_n)_n$ a sequence of generators. 

We define a domain $\mathbf{D}_A$ for $(A_n)_n\in \mathcal{A}^{M}_3,$ that is $(A_n)_n:\mathbf{D}_A\subset \mathcal{E}_X^M\to \mathcal{E}_X^M,$ as
\begin{align*}
\mathbf{D}_A=\left\{(x_n)_n\in \mathcal{E}_X^M\,:\,x_n\in D,\,n\in\mathbb{N}\;\;\wedge\; (A_n x_n)_n\in \mathcal{E}_X^M\right\}.
\end{align*}

\begin{Proposition} \label{pr21}
Let $(A_n)_n$ be a sequence of generators and $(y_n)_n\in \mathcal{E}_X^M.$ Then $(R(\lambda,A_n)y_n)_n=(x_n)_n\in \mathbf{D}_A,$ $\lambda\in (\omega,\infty).$ Conversely, if $(x_n)_n\in \mathbf D_A$, then  there exists $(y_n)_n\in\mathcal E^M_X$ so that $R(\lambda, A_n)y_n=x_n, $ $n\in \mathbb N,$ for $\lambda\in(\omega,\infty).$
\end{Proposition}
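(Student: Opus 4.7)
The plan is to handle the two directions of the equivalence separately, exploiting the resolvent identity $A_n R(\lambda,A_n) = \lambda R(\lambda,A_n) - I$ on $X$, and $R(\lambda,A_n)(\lambda I - A_n) = I$ on $D = D_{A_n}$, both of which hold because $\lambda \in \rho(A_n)$ by (G2). The moderateness of the resulting sequences will follow from (G3) together with the fact that the product/sum of two moderate sequences is moderate (the $\mathcal{O}(n^a)$ estimates simply add exponents).

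For the forward direction, set $x_n := R(\lambda,A_n)y_n$. First I would observe that $x_n \in \operatorname{ran} R(\lambda,A_n) \subset D_{A_n} = D$ by (G1), so the domain condition is satisfied pointwise in $n$. Next, using the characterization recalled in the notation subsection, (G3) supplies a moderate scalar sequence $(M_n)_n \in \mathcal{E}^M_{\mathbb{R}}$ with $\|R(\lambda,A_n)y_n\|_X \leq M_n \|y_n\|_X$; since $(y_n)_n \in \mathcal{E}^M_X$ by hypothesis, the product $M_n\|y_n\|_X$ grows polynomially in $n$, so $(x_n)_n \in \mathcal{E}^M_X$. Finally, to show $(A_n x_n)_n \in \mathcal{E}^M_X$, I use the resolvent identity
\begin{equation*}
A_n x_n = A_n R(\lambda,A_n) y_n = \lambda R(\lambda,A_n) y_n - y_n = \lambda x_n - y_n,
\end{equation*}
and conclude that $(A_n x_n)_n$ is moderate as the difference of two moderate sequences. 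Hence $(x_n)_n \in \mathbf{D}_A$.

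For the converse direction, given $(x_n)_n \in \mathbf{D}_A$, define $y_n := \lambda x_n - A_n x_n$. This makes sense because $x_n \in D = D_{A_n}$, and $(y_n)_n \in \mathcal{E}^M_X$ since both $(x_n)_n$ and $(A_n x_n)_n$ are moderate by the definition of $\mathbf{D}_A$. Since $\lambda \in \rho(A_n)$, the operator $\lambda I - A_n : D \to X$ is a bijection with inverse $R(\lambda,A_n)$, so
\begin{equation*}
R(\lambda,A_n)y_n = R(\lambda,A_n)(\lambda I - A_n)x_n = x_n, \quad n \in \mathbb{N},
\end{equation*}
as required.

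There is no real obstacle here; the only thing to be careful about is bookkeeping of the moderate growth order, i.e.\ verifying that the exponents $a$ coming from (G3) and from moderateness of $(y_n)_n$ can be combined into a single exponent valid for all sufficiently large $n$. This is exactly the closure of $\mathcal{E}^M_X$ under the algebraic operations used above, and it is immediate from the definition.
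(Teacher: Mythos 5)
Your argument is correct and coincides with the paper's own proof: in both directions it uses $A_nR(\lambda,A_n)y_n=\lambda R(\lambda,A_n)y_n-y_n$ (respectively $y_n=\lambda x_n-A_nx_n$ with $R(\lambda,A_n)y_n=x_n$) together with the closure of $\mathcal{E}^M_X$ under the relevant operations and the characterization of $\mathcal{E}^M_{\mathcal{L}(X)}$ via moderate bounds $\|R(\lambda,A_n)y\|_X\leq M_n\|y\|_X$ coming from (G3). The only difference is that you spell out the moderateness bookkeeping that the paper leaves implicit.
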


\begin{proof}
Let $\lambda\in (\omega,\infty).$ It is clear that $x_n=R(\lambda,A_n)y_n\in D$ for every $n\in\mathbb{N}$ and that $(x_n)_n=(R(\lambda,A_n)y_n)_n\in \mathcal{E}_X^M,$ $\lambda\in (\omega,\infty).$  Finally, since $(x_n)_n,(y_n)_n\in \mathcal{E}_X^M$ we have $(A_n x_n)_n=(\lambda x_n - y_n)_n\in \mathcal{E}_X^M.$

For the converse assertion, just note that if $y_n=\lambda x_n-A_nx_n, $ $n\in\mathbb N$, then $(y_n)_n\in \mathcal{E}_X^M$ and $R(\lambda,A_n)y_n=x_n, $ $n\in\mathbb N.$
\end{proof}

A range $\mathbf{R}_{\lambda,A},$ $\lambda\in (\omega,\infty),$ of the sequence of resolvents $(R(\lambda,A_n))_n,$ that corresponds to the sequence of generators $(A_n)_n,$ is defined as:
\begin{align*}
\mathbf{R}_{\lambda,A}=\left\{(x_n)_n\in \mathcal{E}_X^M:\text{there exists } (y_n)_n\in \mathcal{E}_X^M \text{ so that }  (R(\lambda,A_n)y_n)_n=(x_n)_n\right\}.
\end{align*}
Since for every $\lambda,\lambda'\in (\omega,\infty)$ one obtains $\mathbf{R}_{\lambda,A}=\mathbf{R}_{\lambda',A},$ we will use notation $\mathbf{R}_A=\mathbf{R}_{\lambda,A},$ $\lambda\in (\omega,\infty).$ Now we state the direct consequence of Proposition \ref{pr21}.

\begin{Corollary}\label{cor D=R}
$\mathbf{R}_{A}=\mathbf D_A$.
\end{Corollary}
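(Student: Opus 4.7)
The plan is to observe that Corollary \ref{cor D=R} is essentially a reformulation of Proposition \ref{pr21} in the language of the set $\mathbf{R}_A$, so the proof reduces to checking the two inclusions, each of which corresponds to one direction of Proposition \ref{pr21}.

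First I would fix $\lambda \in (\omega,\infty)$ (the choice is immaterial by the remark that $\mathbf{R}_{\lambda,A}$ is independent of $\lambda$) and prove $\mathbf{R}_A \subset \mathbf{D}_A$. Take $(x_n)_n \in \mathbf{R}_A$; by definition there exists $(y_n)_n \in \mathcal{E}_X^M$ with $x_n = R(\lambda, A_n) y_n$ for every $n$. The first part of Proposition \ref{pr21} then gives directly $(x_n)_n = (R(\lambda,A_n) y_n)_n \in \mathbf{D}_A$, which is exactly what we need.

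For the reverse inclusion $\mathbf{D}_A \subset \mathbf{R}_A$, I would take $(x_n)_n \in \mathbf{D}_A$ and apply the converse part of Proposition \ref{pr21}, which provides a sequence $(y_n)_n \in \mathcal{E}_X^M$ with $R(\lambda,A_n) y_n = x_n$ for all $n$. This is precisely the witnessing sequence needed in the definition of $\mathbf{R}_A$, so $(x_n)_n \in \mathbf{R}_A$.

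I do not expect any genuine obstacle here, since all the substantive content — namely that $R(\lambda,A_n) y_n$ lands in $D$ with $A_n$-image still moderate, and that any moderate $(x_n)_n$ in the common domain produces a moderate preimage $y_n = \lambda x_n - A_n x_n$ — has already been done in Proposition \ref{pr21}. The only thing to be careful about is not confusing the role of $\lambda$: since $\mathbf{D}_A$ is defined without reference to $\lambda$ while $\mathbf{R}_{\lambda,A}$ a priori depends on $\lambda$, the equality $\mathbf{R}_{\lambda,A}=\mathbf{R}_{\lambda',A}$ noted right before the corollary is what legitimizes writing $\mathbf{R}_A$ on the left-hand side.
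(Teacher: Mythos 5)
Your proposal is correct and matches the paper exactly: the paper states the corollary as a direct consequence of Proposition \ref{pr21}, and your two inclusions are precisely the two directions of that proposition. Your remark about the $\lambda$-independence of $\mathbf{R}_{\lambda,A}$ is the same point the paper makes just before the corollary.
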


\subsection{Generalized exponentially bounded integrated semigroup and sequences of strong infinitesimal generators}
\label{ss geis}

\begin{definition}\label{def g.i.g.}
Let $(A_n)_n\in \mathcal{A}^{M}_3.$ It is called a sequence of infinitesimal generators if there exists a sequence   $(S_n)_n\in \mathcal{C}^M_{\rm exp}([0,\infty);\mathcal{L}(X)),$  that is (\ref{exp1}) holds, and
\begin{align}\label{rez}
R(\lambda,A_n)=\lambda\int_0^\infty e^{-\lambda t}S_n(t)\,dt,\quad \lambda\in (\omega,\infty),\quad n\in \mathbb{N};
\end{align}
$(S_n)_n$ is called a sequence of exponentially bounded integrated semigroups (in short, g.e.i.s.  or in plural g.e.i.s.'s) generated by $(A_n)_n.$
\end{definition}

\begin{Remark}
If \eqref{exp1} holds,  then by Theorem 3.1 of Arendt \cite{A 1987},  the necessary and sufficient condition  for $(S_n)_n$ to be a g.e.i.s. is that $R(\lambda,A_n), $ $\lambda\in(\omega,\infty),$ is a pseudoresolvent, for  every $n\in\mathbb N.$
\end{Remark}

A direct application  of Theorem 2.5.1 in \cite{ABHN 2011} gives in Theorem \ref{th 1} below the existence of a sequence of exponentially bounded integrated semigroups $S_n,$ $n\in\mathbb N,$ with the generators $A_n, $ $n\in\mathbb N,$ for which the conditions (G1) -  (G3) hold, as well as the next one,
\begin{align}\label{G5}
\sup_{{\rm Re}\,\lambda>\omega}\| \lambda^{b} R(\lambda,A_n)\|_{\mathcal{L}(X)} \leq M_n,\ n\in\mathbb{N},\;\text{ for some } \; b>0\text{ and } \; (M_n)_n\in \mathcal{E}^M_\mathbb{R}.
\end{align}

\begin{Theorem}\label{th 1}
Let $(A_n)_n \in \mathcal{A}^{M}_3$ so that it satisfies condition \eqref{G5}.Then there exists a sequence $(S_n)_n$ of exponentially bounded integrated semigroups such that
\begin{align*}
R(\lambda,A_n)=\lambda\int_0^\infty e^{-\lambda t}S_n(t)\,dt,\quad \lambda\in (\omega,\infty),\quad n\in\mathbb{N},
\end{align*}
and $(S_n)_n\in \mathcal{C}^M_{\rm exp}([0,\infty);\mathcal{L}(X)).$ More precisely, the growth condition for $(S_n)_n$ is given by
\begin{align*}
\sup_{t>0}\|e^{-\omega t}t^{-b}S_n(t)x\|_X\leq  M'_n,\;n\in\mathbb{N},\,\text{ for some } ( M'_n)_n\in \mathcal{E}^M_\mathbb{R}.
\end{align*}
\end{Theorem}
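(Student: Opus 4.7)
The plan is to apply the classical generation result, Theorem 2.5.1 of \cite{ABHN 2011}, separately for each index $n$, and then verify that the growth constants can be bundled into a moderate sequence. The quoted ABHN theorem characterizes generators of once integrated (in fact, $b$-times, $b>0$, when read in its generalized form) exponentially bounded semigroups by a bound of the form $\|\lambda^{b}R(\lambda,A)\|_{\mathcal{L}(X)}\le M$ on a right half-plane $\mathrm{Re}\,\lambda>\omega$, producing a strongly continuous $S(\cdot)$ with $\|e^{-\omega t}t^{-b}S(t)\|_{\mathcal{L}(X)}\le C_{b}M$ for some universal $C_{b}>0$ depending only on $b$ and $\omega$, together with the Laplace transform identity \eqref{rez}.

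First I would fix $n\in\mathbb{N}$ and apply this theorem to $A_{n}$: assumptions (G1)--(G3) give a densely (or at least nonempty) defined closed operator with $(\omega,\infty)\subset\rho(A_{n})$, and the strengthened hypothesis \eqref{G5} supplies the half-plane bound $\|\lambda^{b}R(\lambda,A_{n})\|_{\mathcal{L}(X)}\le M_{n}$. Hence there exists a strongly continuous $S_{n}:[0,\infty)\to\mathcal{L}(X)$ with generator $A_{n}$ satisfying
\begin{equation*}
R(\lambda,A_{n})=\lambda\int_{0}^{\infty}e^{-\lambda t}S_{n}(t)\,dt,\quad\lambda\in(\omega,\infty),
\end{equation*}
and the pointwise estimate $\|S_{n}(t)x\|_{X}\le C_{b}M_{n}\,e^{\omega t}t^{b}\|x\|_{X}$ for all $t\geq 0$, $x\in X$. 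Setting $M'_{n}:=C_{b}M_{n}$ gives the advertised bound.

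Next, the key step is to verify that the bundle $(S_{n})_{n}$ lies in $\mathcal{C}^{M}_{\mathrm{exp}}([0,\infty);\mathcal{L}(X))$. Since $(M_{n})_{n}\in\mathcal{E}^{M}_{\mathbb{R}}$, the scaling by the absolute constant $C_{b}$ preserves moderation, so $(M'_{n})_{n}\in\mathcal{E}^{M}_{\mathbb{R}}$. Picking any $\omega'>\omega$, the function $t\mapsto e^{-(\omega'-\omega)t}t^{b}$ is bounded on $[0,\infty)$ by some $K_{b,\omega',\omega}$, so
\begin{equation*}
\sup_{t\geq 0}\|e^{-\omega' t}S_{n}(t)\|_{\mathcal{L}(X)}\le K_{b,\omega',\omega}\,M'_{n}=\mathcal{O}(n^{a})
\end{equation*}
for the same $a$ that controls $(M_{n})_{n}$, which is exactly \eqref{exp1}.

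The part I expect to be mildly delicate is not any of the above but the transcription of the ABHN constant into an explicit bound independent of $n$: one must be sure that the constant $C_{b}$ produced by the Widder--Arendt inversion argument in \cite{ABHN 2011} depends only on $b$ and $\omega$ and not on the particular operator, so that the moderate character of $(M_{n})_{n}$ transfers faithfully to $(M'_{n})_{n}$. Once this is noted, the Laplace transform representation \eqref{rez} is the content of the generation theorem for each $n$ and requires no further work.
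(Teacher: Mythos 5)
Your proposal is correct and follows essentially the same route as the paper: apply Theorem 2.5.1 of \cite{ABHN 2011} to each $A_n$ separately, and then check that the resulting constants form a moderate sequence. The ``delicate point'' you flag --- that the constant in the inversion estimate depends only on $b$ (and $\alpha>\omega$), not on the operator --- is exactly what the paper verifies by writing out the explicit bound from the end of the proof of Theorem 2.5.1 and choosing $R=1/t$; your subsequent passage from the $e^{\omega t}t^{b}$ bound to membership in $\mathcal{C}^M_{\rm exp}([0,\infty);\mathcal{L}(X))$ matches the paper's choice of $\omega_1\geq\omega+b$.
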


\begin{proof}
Let $x\in X$.  Then  assumption \eqref{G5} implies that for every $n\in\mathbb N,$
\begin{align*}
S_n(t)x=\frac{1}{2\pi}\int_{-\infty}^\infty e^{(\alpha+ir)t} \frac{R(\alpha+ir,A_n )x}{\alpha+ir}\,dr,\quad t\geq 0,
\end{align*}
where $\alpha>\omega$ and $S_n (\cdot)x\in C([0,\infty),X)$ (the space of continuous functions $[0,\infty)\rightarrow X$).  Moreover,  
\begin{align*}
\sup_{t>0}\|e^{-\omega t}t^{-b}S_n(t)x\|_X\leq M'_n,\quad n\in\mathbb{N},
\end{align*}
for some $(M'_n)_n\in \mathcal{E}^M_\mathbb{R}.$ This is a direct consequence of Theorems 3.2.8 and 2.5.1 in \cite{ABHN 2011} where, with fixed $n,$ we have to put $q(\lambda)=\lambda^{b-1}R(\lambda,A_n)x$ and use this theorems for $f(\cdot)=S_n(\cdot)x.$ Namely, as in \cite{ABHN 2011} at the very end of the proof of Theorem 2.5.1, with $R>0$, one has, 
\begin{equation*}
\|S_n(t)\|_{\mathcal{L}(X)}\leq \frac{M_n e^{\alpha t}}{\pi b R^b}+\frac{M_n e^{\alpha t} }{\pi R^b}\int_0^{\pi/2} e^{Rt\cos\theta} \,d\theta,\quad t>0,
\end{equation*}
where
\begin{equation*}
M_n=\sup_{{\rm Re}\,\lambda>\omega}\|\lambda^{b} R(\lambda,A_n)\|_{\mathcal{L}(X)}, \quad n\in\mathbb{N}.
\end{equation*}
Now, taking $R=1/t$ one obtains $\|e^{-\omega t}t^{-b}S_n(t)\|_{\mathcal{L}(X)}\leq CM_n=M'_n,$ $n\in\mathbb{N},$ $t>0.$ Clearly, $\sup_{t>0}\|e^{-\omega t}t^{-b}S_n(t)\|_{\mathcal{L}(X)}\leq M'_n$ which, for $\omega_1\geq \omega+b,$ implies
\begin{equation*}
\sup_{t\geq 0} \|e^{-\omega_1 t} S_n (t)\|_{\mathcal{L}(X)}\leq M'_n,\quad n\in\mathbb{N};
\end{equation*}
so, $(S_n)_n\in \mathcal{C}^M_{\rm exp}([0,\infty);\mathcal{L}(X)).$
\end{proof}

The obtained  growth condition for $(S_n)_n$ is stronger than the one which characterizes the growth in $\mathcal{C}^M_{\rm exp}([0,\infty);\mathcal{L}(X))$ because it gives the behaviour of the sequence $(S_n)_n$ when $t\to 0.$

\section{Revisiting of known examples }\label{examples}

All   one time integrated semigroups in this section are  well known (for fixed $n$).  They are used for the explanation of our approach to sequences of such semigroups.  Our main literature are results for integrated semigroups given in \cite{ABHN 2011},  Section 8.3. Concerning notation, if $t\mapsto f(t,x)$ is a continuous function on $[0,\infty)$ with values in the Schwartz space of distributions $\mathcal{D}'(\mathbb{R}^d),$ we write  $f(t,x)\in C([0,\infty),\mathcal D'(\mathbb R^d)).$ Additionally, if the above function  is continuously differentiable, we write $f(t,x)\in C^1([0,\infty),\mathcal D'(\mathbb R^d))$. These functions are elements of $\mathcal D'((0,\infty)\times\mathbb R^d)$ through the dual pairing $\langle f(t,x),\psi(t,x)\rangle,$ $\psi\in\mathcal{D}((0,\infty)\times\mathbb{R}^d),$ where $\mathcal{D}((0,\infty)\times\mathbb{R}^d)$ is a space of smooth function $\psi$ supported by a compact set in $(0,\infty)\times\mathbb{R}^d$ with the usual convergence structure. 

Recall \cite{ABHN 2011},  a smooth function $a$ on $\mathbb{R}^d$ is called a symbol belonging to $S^{m}_{1,0},$ $m\in\mathbb{N},$ if $|D^\alpha_\xi a(\xi)|\leq C \langle\xi\rangle^{m-|\alpha|},$ $ \xi\in \mathbb{R}^d,$  for some $C>0$ and all $\alpha\in (\mathbb N\cup\{0\})^d, $ where $ \langle\xi\rangle=(1+|\xi|^2)^{1/2}.$ Then $(a_n)_n\in (S^{m}_{1,0})^\mathbb{N}$ is a moderate sequence of symbols if there exists $(C_n)_n\in \mathcal{E}^M_\mathbb{R}$ so that  
\begin{equation}\label{dot}
|D^\alpha_\xi a_n(\xi)|\leq C_n \langle\xi\rangle^{m-|\alpha|},\quad \xi\in \mathbb{R}^d,\ n\in\mathbb{N}.
\end{equation}

With the notation $D=(D_1,...,D_d),$ $D_j=\partial/(i\partial x),$ $j=1,...,d,$ and $\mathcal{F}$ and $\mathcal{F}^{-1}$ for the Fourier and inverse Fourier transform, we consider pseudo-differential operator  formally defined by $a(D)f=({\rm Op}\  a)f= \mathcal{F}^{-1}(a \mathcal{F}f),$ where $a\in S^{m}_{1,0}$ and $f$ belongs to an appropriate space of functions or distributions. (Here, the notation $D$ for the differential operator should not be confused with $D=D_A\subset X,$ which is the domain of the corresponding operator $A.$) Usually, a sequence of  such operators $(a_n(D))_n$ can be considered as a stationary one $a_n=a,$ $ n\in\mathbb N,$  or as a sequence of approximations of $a$.

\begin{Remark}\label{rim}
The regularization  of a  Cauchy problem (\ref{jed}) with $u_0\in\mathcal D'(\mathbb R^d)$ and $f\in C([0,\infty),\mathcal D'(\mathbb R^d))$ leads to a family of Cauchy problems with $u_{0,n}$ and $f_n, $ $n\in\mathbb N,$ belonging to appropriate function spaces,
\begin{equation}\label{eq **}
\frac{\partial}{\partial t}w_n(t,x) - a_n(D)w_n(t,x) =f_n(t,x),\quad w_n (0,x)=u_{0,n}(x),\quad n\in\mathbb{N},
\end{equation}
as follows. Let $\theta\in\mathcal D(\mathbb R^d).$ Assume that it is non-negative and $\int_{\mathbb R^d}\theta(x)dx=1$. Let $\theta_n(x)=n^d\theta(nx),$ $ x\in\mathbb R^d$; this is a delta sequence. In the case when $u_0(x)$ is a distribution and  $f(t,x)\in C([0,\infty),\mathcal D'(\mathbb R^d))$, we make  regularization by the use of convolution: 
\begin{align*}
u_{0,n}(x)=u_0(x)*\theta_n(x), \quad f_n(t,x)= f(t,x)*_x\theta_n(x),\quad n\in \mathbb N,\  t\geq 0, \ x\in\mathbb R^d.
\end{align*}
\end{Remark}

In order to show that the regularizations of Remark \ref{rim} determine elements of the  domain $\mathbf{D}_A$ (cf. Subsection \ref{ss sog})  related to the pseudo-differential operators $A_n=a_n(D)=({\rm Op}\;a_n),$ $n\in\mathbb N,$  we recall that $g\in\mathcal D'_{L^p}(\mathbb R^{d}),$ $ p\in(1,\infty],$ if and only if  it is of the form $g=\sum_{|\alpha|\leq k} g_\alpha^{(\alpha)}$, where $g_\alpha\in L^p(\mathbb R^d),$ $k\in\mathbb{N}\cup\{0\}.$  Recall also, that $\mathcal D'_{L^p}(\mathbb R^{d})$ is a strong dual of the space  $\mathcal D_{L^q}(\mathbb R^{d}),$ $ q=p/(p-1)$ ($q=1$ for $p=\infty$), consisting of smooth functions $\phi$ for which all the norms $\|\phi^{(\alpha)}\|_{L^q(\mathbb R^d)},$ $ \alpha\in (\mathbb N\cup\{0\})^d,$ are finite. Using the H\" older inequality $\|g_\alpha*\theta_n^{(\alpha)}\|_{L^p}\leq \|g_{\alpha}\|_{L^p}\|\theta_n^{(\alpha)}\|_{L^q}$, we have 
\begin{equation}\label{lpp}
g_n=\sum_{|\alpha|\leq k} g_\alpha*\theta_n^{(\alpha)}\in L^p(\mathbb R^d), \;\text{ since }\; g_\alpha*\theta_n^{(\alpha)}\in L^p(\mathbb R^d),\ n\in\mathbb N.
\end{equation}  
Finally,  $(\theta_n^{(\alpha)}(x))_n=(n^{d+|\alpha|}\theta^{(\alpha)}(nx))_n,$ $x\in\mathbb{R}^d,$ imply $(\theta^{(\alpha)}_n)_n\in \mathcal E^M_{L^q(\mathbb R^{d})}.$ So, $(g_n)_n\in\mathcal{E}^M_{L^p(\mathbb{R}^{d})}.$ 

\begin{Remark} The case $p=1$ should be treated in another way. We exclude this case in order to simplify our exposition.
\end{Remark}

Recall, if $a\in S^m_{1,0},$ then it determines a pseudo-differential operator on $L^p(\mathbb R^d)$,  with the domain 
\begin{align*}
D_{{\rm Op}\,a}=\{g\in L^p(\mathbb R^d): \mathcal F^{-1}(a(\xi)\mathcal Fg(\xi))\in L^p(\mathbb R^d)\}.
\end{align*}
Since $\mathcal S(\mathbb R^d)\subset D_{{\rm Op}\,a},$  these operators are densely defined.  Moreover, we have the next lemma.

\begin{Lemma}
Let $p\in[1,\infty),$ and $g_n,$ $ n\in \mathbb N,$ be of the form (\ref{lpp}). Let $(a_n)_n\in (S^m_{1,0})^{\mathbb N}$ so that (\ref{dot}) holds. Then,  $(({\rm Op}\  a_n)g_n)_n$ belongs to $ \mathcal{E}^M_{L^p(\mathbb{R}^{d})}.$
\end{Lemma}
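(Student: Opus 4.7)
The plan is to reduce everything to a moderate $L^1$-bound for a Fourier multiplier acting on a single Schwartz function, and then to estimate that bound via the Schwartz decay of $\hat\theta$ against the moderate symbol growth. By linearity it suffices to prove that for each multi-index $\alpha$ with $|\alpha|\le k$ the sequence $(({\rm Op}\,a_n)(g_\alpha\ast\theta_n^{(\alpha)}))_n$ lies in $\mathcal{E}^M_{L^p(\mathbb{R}^d)}$. Since $a_n$ is a pure Fourier multiplier (the symbol depends only on $\xi$), the operator ${\rm Op}\,a_n$ commutes with convolution against the Schwartz function $\theta_n^{(\alpha)}$; this is verified on the Fourier side, since $a_n\widehat{\theta_n^{(\alpha)}}$ is Schwartz and hence its pointwise product with the tempered distribution $\widehat{g_\alpha}$ is a tempered distribution. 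Therefore
\[
({\rm Op}\,a_n)(g_\alpha\ast\theta_n^{(\alpha)})=g_\alpha\ast\bigl(({\rm Op}\,a_n)\theta_n^{(\alpha)}\bigr),
\]
and Young's inequality $\|f\ast h\|_{L^p}\le\|f\|_{L^p}\|h\|_{L^1}$ gives
\[
\bigl\|({\rm Op}\,a_n)(g_\alpha\ast\theta_n^{(\alpha)})\bigr\|_{L^p}\le\|g_\alpha\|_{L^p}\bigl\|({\rm Op}\,a_n)\theta_n^{(\alpha)}\bigr\|_{L^1},
\]
so everything reduces to a polynomial-in-$n$ bound on $\|({\rm Op}\,a_n)\theta_n^{(\alpha)}\|_{L^1}$.

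To estimate this $L^1$-norm I would use the explicit representation
\[
({\rm Op}\,a_n)\theta_n^{(\alpha)}(x)=(2\pi)^{-d}\int_{\mathbb{R}^d}e^{ix\cdot\xi}\,a_n(\xi)\,(i\xi)^\alpha\,\hat\theta(\xi/n)\,d\xi,
\]
and rescale $\xi=n\eta$. This produces $({\rm Op}\,a_n)\theta_n^{(\alpha)}(x)=n^d(\mathcal{F}^{-1}F_n)(nx)$ with $F_n(\eta)=a_n(n\eta)(in\eta)^\alpha\hat\theta(\eta)$, so a change of variables yields $\|({\rm Op}\,a_n)\theta_n^{(\alpha)}\|_{L^1}=\|\mathcal{F}^{-1}F_n\|_{L^1}$. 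The standard weighted bound $(1+|x|)^{d+1}|\mathcal{F}^{-1}F_n(x)|\le C\sum_{|\beta|\le d+1}\|D^\beta F_n\|_{L^1}$ then gives
\[
\|\mathcal{F}^{-1}F_n\|_{L^1(\mathbb{R}^d)}\le C\sum_{|\beta|\le d+1}\|D^\beta F_n\|_{L^1(\mathbb{R}^d)}.
\]

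The key technical step is to bound each $\|D^\beta F_n\|_{L^1}$ by a polynomial in $n$. Leibniz expansion distributes $D^\beta$ across the three factors in $F_n$: chain-rule derivatives of $a_n(n\eta)$ produce factors $n^{|\beta_1|}$ multiplying $(D^{\beta_1}a_n)(n\eta)$, which by \eqref{dot} satisfies $|(D^{\beta_1}a_n)(n\eta)|\le C_n\langle n\eta\rangle^{m-|\beta_1|}\le C\,C_n\,n^{(m-|\beta_1|)_+}\langle\eta\rangle^{m}$; derivatives of $(in\eta)^\alpha$ give $n^{|\alpha|}$ times a polynomial in $\eta$ of degree $\le|\alpha|$; and derivatives of $\hat\theta$ remain in $\mathcal{S}(\mathbb{R}^d)$. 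The Schwartz factor absorbs the polynomial growth in $\eta$, so integration is finite and yields $\|D^\beta F_n\|_{L^1}\le C\,C_n\,n^{N}$ for some exponent $N=N(m,k,d)$. Since $(C_n)_n\in\mathcal{E}^M_{\mathbb{R}}$ by hypothesis, $\|({\rm Op}\,a_n)\theta_n^{(\alpha)}\|_{L^1}$ grows at most polynomially in $n$; summing the Young estimate over $|\alpha|\le k$ concludes the proof. The main obstacle, and the only nontrivial step, is the careful bookkeeping of the $n$-powers coming from the rescaling $\xi=n\eta$, the chain-rule derivatives of $a_n(n\eta)$ and the moderate constants $C_n$, so as to confirm that they assemble into a single polynomial bound in $n$.
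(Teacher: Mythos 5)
Your argument is correct. It differs from the paper's in how the multiplier is distributed: the paper writes ${\rm Op}\,a_n = h_n \ast (1-\Delta_x)^s$ with $h_n=\mathcal F^{-1}\bigl(a_n(\xi)(1+2\pi|\xi|^2)^{-s}\bigr)$ for $s$ large, lets the differential factor $(1-\Delta_x)^s$ fall on the mollifiers $\theta_n^{(\alpha)}$ (which stay moderate in $L^q$), and controls $\|h_n\|_{L^q}$ by a moderate sequence, finishing with H\"older/Young; you instead commute the full multiplier past the convolution with $g_\alpha$, reducing everything to a moderate bound on $\|({\rm Op}\,a_n)\theta_n^{(\alpha)}\|_{L^1}$, which you obtain by the rescaling $\xi=n\eta$ together with the standard weighted estimate $\|\mathcal F^{-1}F_n\|_{L^1}\le C\sum_{|\beta|\le d+1}\|D^\beta F_n\|_{L^1}$ and a Leibniz bookkeeping of the powers of $n$ against the symbol estimates \eqref{dot}. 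The two proofs rest on the same philosophy (the singular part of the operator is absorbed by the mollifier sequence, whose relevant norms grow only polynomially in $n$), but your version is more self-contained: the paper asserts $\|h_n\|_{L^q}=\mathcal O(n^a)$ without detail, and that assertion itself requires an integration-by-parts argument of exactly the kind you carry out for $D^\beta F_n$; your route also makes the exponent $N=N(m,k,d)$ explicit and avoids choosing the auxiliary parameter $s$. The price is that you must justify the commutation $({\rm Op}\,a_n)(g_\alpha\ast\theta_n^{(\alpha)})=g_\alpha\ast({\rm Op}\,a_n)\theta_n^{(\alpha)}$ for $g_\alpha\in L^p$ with $p>2$ (where $\widehat{g_\alpha}$ is only a tempered distribution), which you correctly note follows because $a_n\widehat{\theta_n^{(\alpha)}}$ is Schwartz.
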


\begin{proof}
Take $s\in\mathbb N$  such that $h_n(x),$ $x\in\mathbb R^d$, defined by $x\mapsto h_n(x)=\int_{\mathbb R^d}  e^{2\pi i \xi x}a_n(\xi)(1+2\pi |\xi|^2)^{-s} d\xi$ belongs to $L^q(\mathbb R^d)$. Then
\begin{align}\label{eq :}
({\rm Op}\  a_n)g_n(x) & =\int_{\mathbb{R}^n}  (1- \Delta_x)^s e^{2\pi i \xi x}\frac{a_n(\xi)}{(1+2\pi |\xi|^2)^s}\ d\xi \ast g_n(x)\nonumber\\
& = h_n(x) \ast (1- \Delta_x)^s g_n(x) \qquad(\Delta_x \text{ is Laplacian}). 
\end{align}
By (\ref{dot}), one can find $C>0$ and $a\in\mathbb{R}$ so that $\|h_n\|_{L^q}\leq Cn^a,$ $ n\in\mathbb N. $ We use  (\ref{lpp}) and in $(1-\Delta_x)^s g_n(x),$ on the right hand side of (\ref{lpp}), we differentiate only the part   $\theta_n^{(\alpha)}$. Clearly, $(\theta^{(\alpha)}_n)_n\in \mathcal E^M_{L^q(\mathbb R^{d})}.$ So, using again H\" older inequality, one obtains that there exists a sequence $(C_n)_n\in\mathcal E^M_{\mathbb R}$ so that  
\begin{align*}
\|{(\rm Op}\  a_n) g_n\|_{L^p(\mathbb{R}^d)}\leq C_n,\quad n\in\mathbb N. 
\end{align*}
This completes the proof.
\end{proof}

We continue with the assumptions (cf. \cite{ABHN 2011} Subsections 8.2, 8.3): \begin{enumerate}
\item[(A1):] 
$\hspace{2,75cm}\exists\, r>0, \ \exists\,  L>0, \ \exists\, C_n>0\; \;n\in\mathbb N, \ \exists\, c_0>0,
$
$$ |a_n(\xi)|\geq C_n |\xi|^r,  \ |\xi|>L \mbox{ and } 1/C_n\leq c_0;$$
\item[(A2):] $\rho({\rm Op}\ a_n)\neq \emptyset,$ $n\in\mathbb N;$ 
\item[(A3):] $\sup_{\xi\in\mathbb{R}^d}{\rm Re}\ a_n(\xi)\leq m,$ $n\in\mathbb{N},$  for some $m\in \mathbb{R}.$
\end{enumerate}

\begin{Proposition} \label{pret} Assume that a sequence of symbols $(a_n)_n,$  satisfies \eqref{dot}, \eqref{delc},  (A1) - (A3), as well as  that all ${\rm Op}\ a_n$ have the same domain $D=D_{{\rm Op}\,a_n}, $ $n\in\mathbb N.$ 
Assume that $p$ satisfies
\begin{equation}\label{delc}
\left|\frac{1}{2}-\frac{1}{p}\right|<\frac{r}{md}. 
\end{equation}
Then
\begin{equation}\label{HKe}
S_n(t)u=\mathcal F^{-1}\left(\int_0^t e^{sa_n(\cdot)}ds\ \mathcal F u(\cdot)\right),\quad u\in L^p(\mathbb R^d),\quad n\in\mathbb N,
\end{equation}
is a g.e.i.s. generated by $({\rm Op}\ a_n)_n.$
Moreover,  $({\rm Op}\ a_n)_n\in\mathcal{A}^{M}_3.$ In particular,  (G3) holds with $\sup_{n\in\mathbb N}\|R(\lambda,{\rm Op}\  a_n)\|_{\mathcal L(L^p)}<\infty, $ $\lambda\in(\omega,\infty). $
\end{Proposition}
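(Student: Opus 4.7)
The strategy is to apply, for each fixed $n$, the classical theory of integrated semigroups generated by pseudo-differential operators from \cite{ABHN 2011}, Section 8.3, and then use the moderate symbol bounds (\ref{dot}) together with the uniform parts of the hypotheses — specifically $1/C_n\leq c_0$ in (A1) and the $n$-independent constant $m$ in (A3) — to upgrade those per-$n$ statements into the sequence-level conclusion $(\mathrm{Op}\,a_n)_n\in\mathcal{A}^M_3$ with uniformly bounded resolvents.

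First I would verify (G1) and (G2). (G1) is part of the hypothesis, while (G2) follows by taking $\omega>m$: by (A3), $\mathrm{Re}(\lambda-a_n(\xi))\geq\lambda-m>0$ uniformly in $n$ and $\xi$, so $\lambda-a_n(\xi)\ne 0$ and, combined with (A2), the standard pseudo-differential calculus places $(\omega,\infty)\subset\rho(\mathrm{Op}\,a_n)$ for every $n$, with
\[
R(\lambda,\mathrm{Op}\,a_n)u=\mathcal{F}^{-1}\!\left(\frac{1}{\lambda-a_n(\xi)}\,\mathcal{F}u(\xi)\right),\quad u\in L^p(\mathbb R^d).
\]
To identify the operators $S_n$ from (\ref{HKe}) as the g.e.i.s.~generated by $(\mathrm{Op}\,a_n)_n$, I would compute the scalar Laplace transform
\[
\int_0^\infty e^{-\lambda t}\!\int_0^t e^{s\,a_n(\xi)}\,ds\,dt=\frac{1}{\lambda(\lambda-a_n(\xi))},
\]
which immediately yields $\lambda\int_0^\infty e^{-\lambda t}S_n(t)u\,dt=R(\lambda,\mathrm{Op}\,a_n)u$, i.e.~the defining identity (\ref{rez}) of Definition \ref{def g.i.g.}. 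The strong continuity and the exponential growth controlling $(S_n)_n\in\mathcal{C}^M_{\mathrm{exp}}([0,\infty);\mathcal{L}(L^p))$ would then follow either by direct inspection of (\ref{HKe}) with the Fourier multiplier bounds below, or by Theorem \ref{th 1} once (G3) and a bound of the type (\ref{G5}) have been established.

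For (G3), I would treat $(\lambda-a_n(\xi))^{-1}$ as a Fourier multiplier symbol and apply the Mihlin--H\"ormander theorem used in \cite{ABHN 2011}, Section 8.3. The Leibniz/Fa\`a di Bruno expansion
\[
|D^\alpha_\xi(\lambda-a_n(\xi))^{-1}|\lesssim\sum_{k,\alpha_1+\cdots+\alpha_k=\alpha}\frac{|D^{\alpha_1}a_n(\xi)|\cdots|D^{\alpha_k}a_n(\xi)|}{|\lambda-a_n(\xi)|^{k+1}}
\]
combined with (\ref{dot}) (giving numerator $\lesssim C_n^k\langle\xi\rangle^{km-|\alpha|}$) and the lower bound $|\lambda-a_n(\xi)|\gtrsim\langle\xi\rangle^r$ for $|\xi|>L$ (coming from (A1) using the $n$-uniform constant $1/c_0\geq 1/C_n$) plus $|\lambda-a_n(\xi)|\geq\lambda-m$ everywhere, gives Mihlin-type estimates of the form $|\xi^\alpha D^\alpha_\xi(\lambda-a_n(\xi))^{-1}|\lesssim \tilde C_n(\lambda)$ for $|\alpha|\leq [d/2]+1$. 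Hence $\|R(\lambda,\mathrm{Op}\,a_n)\|_{\mathcal L(L^p)}$ is moderate in $n$, which establishes $(\mathrm{Op}\,a_n)_n\in\mathcal A^M_3$.

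The main obstacle lies in the last claim, namely the strengthening from moderate to the uniform bound $\sup_n\|R(\lambda,\mathrm{Op}\,a_n)\|_{\mathcal L(L^p)}<\infty$. Since the $C_n^k$ factors in the numerators above are genuinely moderate and not uniform, the only way to reach the uniform bound is to verify that every occurrence of $C_n^k$ in the worst-order terms of the Leibniz expansion is absorbed by the matching power of $|\lambda-a_n|^{-k-1}$ via (A1) — which contributes a factor $C_n^{-(k+1)}\leq c_0^{k+1}$ at the top of the symbol hierarchy — so that the residual constant depends only on $c_0$, $m$, $r$, $d$, $p$ and the Mihlin multiplier order. Making this cancellation explicit is the delicate point; it is exactly here that the uniform bound $1/C_n\leq c_0$ in (A1) is indispensable and cannot be replaced by a merely moderate one.
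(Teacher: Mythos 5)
Your route is genuinely different from the paper's, and the difference matters at exactly the point you flag as ``delicate''. The paper does not estimate the resolvent symbol at all: it applies the implication (i)~$\Rightarrow$~(ii) of Theorem~8.3.6 in \cite{ABHN 2011} separately for each $n$ to obtain $S_n$ of the form \eqref{HKe}, argues that the \emph{uniform} constants in (A1) (namely $1/C_n\le c_0$) and (A3) (the $n$-independent $m$) give a uniform exponential bound $\sup_n\sup_{t\ge0}\|e^{-\omega t}S_n(t)\|_{\mathcal L(L^p)}<\infty$, and then reads off both (G2) and the uniform resolvent bound from the Laplace representation \eqref{rez}, since $\|R(\lambda,{\rm Op}\,a_n)\|_{\mathcal L(L^p)}\le\lambda\int_0^\infty e^{-\lambda t}\|S_n(t)\|_{\mathcal L(L^p)}\,dt\le M\lambda/(\lambda-\omega)$. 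In particular the uniform bound on the resolvents is a one-line consequence of the uniform bound on the $S_n$, not of a multiplier estimate for $(\lambda-a_n)^{-1}$.

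The concrete gap in your argument is the proposed cancellation in the last paragraph: it conflates two different families of constants that happen to share the letter $C_n$. The $C_n$ appearing in the numerators of your Leibniz expansion come from \eqref{dot}; they are upper bounds on $|D^{\alpha}a_n|$ and are only assumed \emph{moderate}, i.e.\ they may grow like $n^a$. The $C_n$ in (A1) are lower bounds on $|a_n|$, and the hypothesis $1/C_n\le c_0$ controls their reciprocals only. Applying (A1) to the denominator contributes at most a factor $c_0^{\,k+1}$; it does nothing to absorb the factor $(C_n^{(\ref{dot})})^{k}$ left in the numerator. So this route yields a moderate bound on $\|R(\lambda,{\rm Op}\,a_n)\|_{\mathcal L(L^p)}$ --- enough for (G3) and membership in $\mathcal A^M_3$ --- but it cannot produce the claimed uniform bound $\sup_n\|R(\lambda,{\rm Op}\,a_n)\|_{\mathcal L(L^p)}<\infty$, and the residual constant does \emph{not} depend only on $c_0,m,r,d,p$. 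A secondary issue: condition \eqref{delc} is never used in your proposal, yet it is precisely the hypothesis of Theorem~8.3.6 in \cite{ABHN 2011} that makes the multiplier $\int_0^t e^{sa_n(\cdot)}\,ds$ in \eqref{HKe} an $L^p$-multiplier for $p\ne2$; the boundedness of $S_n(t)$ on $L^p$ is not a matter of ``direct inspection'' and has to be taken from that theorem (or reproved), after which the paper's Laplace-transform argument gives the uniform resolvent bound you are missing.
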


\begin{proof}
Essentially, assumption (\ref{delc}) implies that $a_n$ determines one time integrated semigroup (cf. \cite{ABHN 2011}). Moreover, by  the implication (i)  $\Rightarrow$ (ii) of Theorem 8.3.6 in \cite{ABHN 2011}  we have directly that $a_n$ determines exponentially bounded integrated semigroup $S_n$ of the form (\ref{HKe}) for  every $n$. By the uniform bound of $1/C_n$ in (A1) and the uniform bound   in (A3), we obtain that $(S_n)_n$ is g.e.i.s.  Since $R(\lambda,{\rm Op}\ a_n)$ is defined by the  $S_n$  with the uniform exponential bound  of all $S_n$ in (\ref{rez}), $n\in \mathbb N,$ we have that (G2) holds with $\omega > |m|,$ as well as that (G3) holds with the uniform bound $\sup_{n\in\mathbb N,\lambda>\omega}\|R(\lambda,{\rm Op}\ a_n)\|_{\mathcal{L}(L^p)}<\infty.$
\end{proof}

\begin{Proposition}\label{3}
Concerning $(a_n)_n$ assume that all the assumptions of Proposition \ref{pret} hold. Let $(u_{0,n})_n\in \mathcal{E}^M_{L^p(\mathbb{R}^d)}$ and  $(f_n)_n, (\frac{d}{dt} \,f_n)_n\in \mathcal{C}^M_{exp}([0,\infty);L^p(\mathbb{R}^{d})),$ $p\in [1,\infty).$  Then the sequence of equation 
\begin{align}\label{equiv1}
w_n (t,x)=u_{0,n}(x)+a_n(D)\int_0^t w_n (r,x)\ dr +\int_0^t  f_n(s,x)\ ds,\quad n\in\mathbb{N},
\end{align}
has a sequence of solutions  $(w_n)_n\in \mathcal{C}^M_{\rm exp}([0,\infty);L^p(\mathbb R^d))$ (mild solutions, for every $n$)  given by $w_n=\frac{d}{dt} v_n$, where
\begin{align}\label{sol11}
v_n(t,x)=S_n(t)u_{0,n}(x) + \int_0^t S_n(t-r)f_n(r,x)\ dr,\quad t\geq 0,\quad x\in \mathbb R^d,\quad n\in\mathbb{N}.
\end{align}
\end{Proposition}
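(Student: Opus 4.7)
The plan is to work fiberwise first: for each $n\in\mathbb{N}$, use the classical variation-of-parameters theory for integrated semigroups to identify $w_n=v_n'$ as a mild solution of (\ref{equiv1}), and then upgrade the pointwise bounds to a moderate-exponential estimate on the whole sequence. By Proposition \ref{pret}, $A_n:={\rm Op}\,a_n$ generates the once-integrated semigroup $S_n$, so for fixed $n$ the formula (\ref{sol11}) is the standard inhomogeneous variation-of-parameters formula (cf.\ Section 3.2 of \cite{ABHN 2011}); the only question is whether $v_n$ is actually $C^1$ in $t$, and whether its derivative $w_n$ satisfies (\ref{equiv1}).

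To differentiate (\ref{sol11}), two ingredients are used. First, the regularized data $u_{0,n}$ arising from Remark \ref{rim} is of the form (\ref{lpp}), so the preceding lemma gives $(a_n(D)u_{0,n})_n\in\mathcal{E}^M_{L^p(\mathbb{R}^d)}$; in particular $u_{0,n}\in D_{A_n}=D$ for every $n$, which yields the classical identity $\frac{d}{dt}S_n(t)u_{0,n}=u_{0,n}+S_n(t)A_n u_{0,n}$. Second, since $f_n\in C^1([0,\infty);L^p)$ and $S_n(0)=0$, a change of variables plus integration by parts yields $\frac{d}{dt}\int_0^t S_n(t-r)f_n(r)\,dr=S_n(t)f_n(0)+\int_0^t S_n(t-r)f_n'(r)\,dr$. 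Adding these,
\[
w_n(t)=u_{0,n}+S_n(t)A_n u_{0,n}+S_n(t)f_n(0)+\int_0^t S_n(t-r)f_n'(r)\,dr,
\]
and (\ref{equiv1}) is obtained by integrating in $t$ and invoking the defining identity $A_n\int_0^t S_n(s)x\,ds=S_n(t)x-tx$, $x\in X$, together with the commutation $A_n S_n(t)x=S_n(t)A_n x$ on $D$.

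For the sequence-level estimate, fix a common $\omega>0$ and moderate $(M_n)_n, (N_n)_n\in\mathcal{E}^M_{\mathbb{R}}$ with $\|S_n(t)\|_{\mathcal{L}(L^p)}\leq M_n e^{\omega t}$ (from Proposition \ref{pret}) and $\max\{\|e^{-\omega t}f_n(t)\|_{L^p},\|e^{-\omega t}f_n'(t)\|_{L^p}\}\leq N_n$ (enlarging $\omega$ if needed). Using the moderateness of $(u_{0,n})_n$ and of $(A_n u_{0,n})_n$, each of the four summands in the formula for $w_n(t)$ is bounded in $L^p$ by a moderate multiple of $e^{\omega t}$ or $t e^{2\omega t}$, and absorbing the factor $t$ into the exponent places $(w_n)_n$ in $\mathcal{C}^M_{\rm exp}([0,\infty);L^p(\mathbb{R}^d))$ as required by (\ref{exp1}).

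The main obstacle is the hidden domain condition $u_{0,n}\in D_{A_n}$ needed to differentiate $S_n(t)u_{0,n}$ classically; this is precisely why the structural representation (\ref{lpp}) for regularized distributional data and the preceding lemma on $({\rm Op}\,a_n)g_n$ were established first. Once that is in hand, the rest is a routine combination of moderate and exponentially bounded sequences in the sense of Subsection \ref{ss not}.
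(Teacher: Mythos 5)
Your proposal is correct and takes essentially the same route as the paper: treat each $n$ separately with the classical inhomogeneous theory for once-integrated semigroups (the paper invokes Corollary 3.2.11(c) of \cite{ABHN 2011}, whereas you inline its proof via the variation-of-parameters identities), and then read off moderateness of $(w_n)_n$ from the moderate exponential bounds on $(S_n)_n$, $(u_{0,n})_n$, $(A_nu_{0,n})_n$ and $(f_n)_n,(f_n')_n$. Your explicit formula for $w_n$ in fact makes both the domain condition $u_{0,n}\in D$ (which the paper also uses, asserting $a_n(D)u_{0,n}\in L^p$ from the regularized form \eqref{lpp}) and the growth estimate more transparent than the paper's one-line conclusion.
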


\begin{proof} We have by Proposition \ref{pret} that $(S_n)_n$ is a g.i.e.s.  so the mappings   $[0,\infty)\rightarrow L^p(\mathbb R^d)$ given by $t\mapsto S_n(t)u_{0,n}(x)$ and $t\mapsto \int_0^t S_n(t-r)f_n(r,x)\ dr$ are continuous, as well as  their derivatives, with respect to $t$, for every fixed $n$. Thus, by assumptions of the proposition, there holds that $a_n(D)u_{0,n}+\frac{d}{dt}f(0)\in L^p(\mathbb R^d), n\in\mathbb N.$ This implies that the assumptions  of  Corollary 3.2.11. in \cite{ABHN 2011} are satisfied. By part c)  of this corollary,  there exists a unique mild solution to (\ref{equiv1}) (with fixed $n$). The fact that $(v_n)_n$ and $(w_n)_n$ have a moderate growth with respect to $n$  follows from the moderate growth of $(S_n)_n,$ $(u_{0,n})_n$ and $(f_n)_n$.
\end{proof}

The sequence of mild solutions $(w_n)_n$ is a very weak solution to (\ref{jed}), in the sense of \cite{GR} and \cite{R} because for every fixed $n$, $w_n(\cdot,\cdot)$ is the distributional solution to (\ref{eq **}),
\begin{equation}\label{dise}
\langle \frac{\partial}{\partial t}w_n(t,x)-a_n(D)w_n(t,x)-f_n(t,x),\psi(t,x)\rangle=0,\quad \psi(t,x)\in\mathcal D([0,\infty)\times\mathbb R^d),
\end{equation}
$w_n(0,x)=u_{0,n}(x), n\in\mathbb N,$ and $(w_n)_n$ has a moderate growth with respect to $n$. Moderate growth means that 
\begin{equation}\label{mod}
\forall \psi\in\mathcal D((0,\infty)\times\mathbb R^d)\;\; \exists m=m_\psi\in\mathbb R,
\end{equation}
\begin{equation*}
|\langle w_n(t,x),\psi(t,x)\rangle|=O(n^m), \;n\rightarrow \infty.
\end{equation*} 
This is a consequence of the fact that the  mapping  $[0,\infty)\times L^p(\mathbb R^d) \ni (t,f) \mapsto S_n(t)f\in L^p(\mathbb R^d)$ is continuous because  $t\mapsto S_n(t,\cdot)\in  L^p(\mathbb{R}^d)$ is continuous,  and determines a distribution $w_n(t,x)\in\mathcal D'((0,\infty)\times\mathbb R^d)$. Note that $C([0,\infty),L^p(\mathbb{R}^d))=C([0,\infty)\times L^q(\mathbb{R}^d)),$ with  $f(t,\varphi)=\int f(t,x)\varphi(x)\,dx,$ $\varphi\in L^q(\mathbb{R}^d).$

The next corollary serves as a motivation for our approach. 

\begin{Corollary} \label{34}
 Let  $a(D)\in S^m_{0,1}$ be a pseudo-differential operator on $L^p(\mathbb R^d)$ so  that  (A1) - (A3) and  (\ref{delc}) hold. Let $(u_{0,n}(x))_n$ and $(f_n(t,x))_n$ be a sequence in $L^p(\mathbb R^d)$ and $C^1([0,\infty),L^p(\mathbb R^d))$, respectively, obtained as a regularization of $u_0\in\mathcal D'_{L^p}(\mathbb R^d)$ and $f\in C^1([0,\infty),\mathcal D'_{L^p}(\mathbb  R^d))$  (as in Remark \ref{rim}).  
Then, the sequence $(v_n)_n$ of the form (\ref{sol11})  determine  $w_n=\frac{d}{dt}v_n, n\in\mathbb N$, a sequence of mild solutions to (\ref{equiv1}); $(w_n)_n$ has a subsequence $(w_{k_n})_n$ with elements in $C([0,\infty),L^p(\mathbb R^d))$  such that it converges to $w(t,x)\in \mathcal D'([0,\infty)\times\mathbb R^d)$. Moreover, $w$ is a weak solution to (\ref{jed}); it satisfies
\begin{align}\label{zad1}
\langle \frac{\partial}{\partial t}w(t,x)-a(D)w(t,x)-f(t,x),\psi(t,x)\rangle=0,\quad \psi(t,x)\in\mathcal D([0,\infty)\times\mathbb R^d),
\end{align}
\begin{equation*}
\langle w(0,x),\psi(x)\rangle=\langle u_0(x),\psi(x) \rangle, \; \psi\in\mathcal D(\mathbb R^d). 
\end{equation*}
\end{Corollary}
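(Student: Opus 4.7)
My plan is to specialize Proposition \ref{3} to the stationary case $a_n\equiv a$, so that $S_n\equiv S$ is one fixed exponentially bounded integrated semigroup, and then use a weak--$\ast$ compactness argument in $\mathcal D'((0,\infty)\times\mathbb R^d)$ to extract a subsequence and pass to the limit in the distributional identity (\ref{dise}).

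First, since $u_0=\sum_{|\alpha|\le k}g_\alpha^{(\alpha)}$ with $g_\alpha\in L^p$ (and analogously for $f$), the argument surrounding (\ref{lpp}) shows $(u_{0,n})_n\in\mathcal E^M_{L^p(\mathbb R^d)}$ and $(f_n)_n,(\tfrac{d}{dt}f_n)_n\in\mathcal C^M_{\mathrm{exp}}([0,\infty);L^p(\mathbb R^d))$, while standard mollifier theory gives $u_{0,n}\to u_0$ in $\mathcal D'_{L^p}(\mathbb R^d)$ and $f_n\to f$ in $C^1([0,\infty),\mathcal D'_{L^p}(\mathbb R^d))$. Proposition \ref{3} then produces $w_n=\tfrac{d}{dt}v_n\in C([0,\infty),L^p(\mathbb R^d))$ solving (\ref{equiv1}) and (\ref{dise}) with $w_n(0,\cdot)=u_{0,n}$.

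The key step is a uniform bound for $(w_n)_n$ in $\mathcal D'((0,\infty)\times\mathbb R^d)$. For $\psi\in\mathcal D((0,\infty)\times\mathbb R^d)$, integration by parts in $t$ together with (\ref{sol11}) gives
\begin{equation*}
\langle w_n,\psi\rangle=-\int_0^\infty\langle S(t)u_{0,n},\partial_t\psi(t,\cdot)\rangle\,dt-\int_0^\infty\!\!\int_0^t\langle S(t-r)f_n(r,\cdot),\partial_t\psi(t,\cdot)\rangle\,dr\,dt.
\end{equation*}
Moving $S(t)$ to the test side via the $L^p$--$L^q$ duality, I use that $S(t)^\ast$ is an $L^q$--bounded Fourier multiplier commuting with spatial derivatives and with exponentially bounded operator norm in $t$, so that $S(t)^\ast\partial_t\psi(t,\cdot)\in\mathcal D_{L^q}(\mathbb R^d)$ depends continuously on $t$ with compact $t$--support. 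Pairing against the $\mathcal D'_{L^p}$--bounded sequences $(u_{0,n})_n$, $(f_n)_n$ yields $|\langle w_n,\psi\rangle|\le C_\psi$ uniformly in $n$. Sequential weak--$\ast$ compactness of bounded subsets of $\mathcal D'$ (Banach--Alaoglu applied to each $\mathcal D'_K$ plus a diagonal argument) then produces a subsequence with $w_{k_n}\to w$ in $\mathcal D'((0,\infty)\times\mathbb R^d)$.

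Finally, to pass to the limit in (\ref{dise}), $\langle\partial_t w_n,\psi\rangle=-\langle w_n,\partial_t\psi\rangle$ and $\langle f_n,\psi\rangle$ converge immediately. For $\langle a(D)w_n,\psi\rangle=\langle w_n,{}^{t}\!a(D)\psi\rangle$, the function ${}^{t}\!a(D)\psi$ belongs to $\mathcal S(\mathbb R^{d+1})$ but \emph{not} to $\mathcal D$; this is the main anticipated obstacle, since plain $\mathcal D'$--convergence of $w_{k_n}$ is then insufficient. I would resolve it by upgrading the uniform bound above from compactly supported test functions to $\mathcal D_{L^q}$--test functions—again via the $\mathcal D'_{L^p}$--duality structure and the density of $\mathcal D$ in $\mathcal D_{L^q}$—so that $(w_n)_n$ is bounded, hence has a convergent subsequence, in $\mathcal D'_{L^p}((0,\infty)\times\mathbb R^d)$; the pairing with ${}^{t}\!a(D)\psi\in\mathcal D_{L^q}$ then passes to the limit and gives (\ref{zad1}). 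The initial condition $\langle w(0,x),\psi(x)\rangle=\langle u_0(x),\psi(x)\rangle$ for $\psi\in\mathcal D(\mathbb R^d)$ follows by passing to the limit in the integrated form (\ref{equiv1}) evaluated at $t=0^+$ and using $u_{0,n}\to u_0$ in $\mathcal D'(\mathbb R^d)$.
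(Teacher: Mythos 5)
Your proposal uses the same core toolbox as the paper (weak--$\ast$ sequential compactness of bounded sets in $\mathcal D'_{L^p}$, regularization bounds via (\ref{lpp}), a diagonal extraction), but organizes the compactness differently. The paper fixes $t$, extracts for each rational $t\in Q_+$ a weakly convergent subsequence of $(v_{k_n}(t,\cdot))_n$ in $\mathcal D'_{L^p}(\mathbb R^d)$, diagonalizes over $Q_+$, and then uses the uniform continuity (equicontinuity in $n$) of $t\mapsto\langle v_{k_n}(t,\cdot),\psi\rangle$ and of $t\mapsto\langle \frac{d}{dt}v_{k_n}(t,\cdot),\psi\rangle$ on bounded intervals to upgrade to convergence for \emph{every} $t\ge 0$, with limit $v\in C^1([0,\infty),\mathcal D'_{L^p})$ and $w=\frac{d}{dt}v$. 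You instead extract directly in the space--time distribution space over $(0,\infty)\times\mathbb R^d$. Both routes are viable, and your identification of the obstacle that ${}^t a(D)\psi\notin\mathcal D$ — together with the fix of upgrading the uniform bound to $\mathcal D_{L^q}$ test functions by density and Banach--Steinbaus — is a legitimate way to pass to the limit in \eqref{zad1}; the paper sidesteps this entirely because its limit already lives in $\mathcal D'_{L^p}(\mathbb R^d)$ for each fixed $t$, so $a(D)$ can be applied there before integrating in $t$.

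The genuine gap is at $t=0$. Your compactness argument produces $w$ only as an element of $\mathcal D'((0,\infty)\times\mathbb R^d)$, and weak--$\ast$ convergence on the \emph{open} cylinder gives no information about a trace at $t=0$; the phrase ``passing to the limit in the integrated form (\ref{equiv1}) evaluated at $t=0^+$'' presupposes that $w(t,\cdot)$ is defined pointwise in $t$ and continuous up to $t=0$, and that $w_{k_n}(t,\cdot)\to w(t,\cdot)$ in some sense that is uniform near $t=0$ — none of which follows from your construction. This is exactly what the paper's pointwise-in-$t$ extraction plus equicontinuity delivers: $v\in C^1([0,\infty),\mathcal D'_{L^p})$, hence $w\in C([0,\infty),\mathcal D'_{L^p})$ with $w(0,\cdot)=\lim_n u_{0,k_n}=u_0$, and convergence against test functions supported up to $t=0$ as required by the statement ($\mathcal D'([0,\infty)\times\mathbb R^d)$, closed at $0$). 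To repair your version you would need to add precisely this equicontinuity step (the maps $t\mapsto\langle w_{k_n}(t,\cdot),\psi\rangle$ are uniformly equicontinuous on $[0,T]$ because $S$, $u_{0,n}$, $f_n$ are uniformly bounded in the relevant norms), after which the two proofs essentially merge.
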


\begin{proof}
We fix $t>0.$ With $S$ as in (\ref{HKe}) (without subindex), we have that $(S(t)u_{0,n}(\cdot))_n$ is a bounded sequence in $\mathcal D'_{L^p}(\mathbb R^d)$. The same holds for $(\int_0^t S(t-r)f_n(r,\cdot)\ dr)_n.$   This implies that there exists a subsequence $v_{k_n}(t,x)=S(t)u_{0,k_n}(x) + \int_0^t S(t-r)f_{k_n}(r,x)\ dr$ so that it converges weakly as $n\rightarrow\infty$ to $v(t,\cdot)\in\mathcal D'_{L^p}(\mathbb R^d)$. If we consider the set of rational points $Q_+\subset[0,\infty),$ $Q_+=\{q_1,,q_2,...\}$ and form a convergent subsequence of already convergent subsequence, by diagonalization, we can construct a subsequence (again denoted as) $(v_{k_n})_n$ so that for every $t\in Q_+,$ $v_{k_n}(t,\cdot)\rightarrow v(t,\cdot)\in \mathcal D'_{L^p}(\mathbb R^d),$ $n\to\infty.$ Since all the elements of this subsequence are continuous with respect to $t$, we obtain that  $v_{k_n}(t,\cdot)\rightarrow v(t,\cdot), t\in[0,\infty), n\rightarrow \infty,$ where $v(t,\cdot)\in C^1([0,\infty),\mathcal D'_{L^p})\subset \mathcal D'([0,\infty)\times\mathbb R^d).$ This is a consequence of the fact that $t\mapsto \langle v_{k_n}(t,x),\psi(x)\rangle,$ $n\in\mathbb{N},$ $\psi\in\mathcal{D}_{L^q}(\mathbb{R}^d)$ as well as  $t\mapsto \langle \frac{d}{dt}v_{k_n}(t,x),\psi(x)\rangle,$ $n\in\mathbb{N},$ $\psi\in\mathcal{D}_{L^q}(\mathbb{R}^d)$ are uniformly continuous sequences of functions on any bounded interval $[0,T],$ $T>0.$ Thus, by the convergence in this space of distributions, $w=\frac{d}{dt}v$ is a weak solution to (\ref{jed}), i.e. \eqref{zad1} holds.
\end{proof}

Assume that $u_0\in L^p(\mathbb R^d),$  $f\in C^1([0,\infty),L^p(\mathbb R^d))$, $|D^\alpha_\xi a_n(\xi)|\leq C \langle\xi\rangle^{m-|\alpha|}, $ $n\in\mathbb N, $ $\xi\in \mathbb{R}^d, $ for some $C>0,$ as well as 
that (\ref{delc}) and (A1) - (A3) hold. Then,  the sequence of equations (\ref{equiv1}) (with $u_0$ and $f$ instead of $u_{0,n}$ and $f_n$) has a sequence of solutions $(w_n)_n$ of the form  (\ref{sol11}), where $S_n(t)$ is given by (\ref{HKe}). Moreover, there exists a subsequence of solutions $(w_{k_n})_n$ such that $w_{k_n}\rightarrow w, $ $n\rightarrow \infty,$  weakly in $ \mathcal D'([0,\infty)\times \mathbb R^d).$ 
This can be proved by the similar arguments as in the proof of the previous corollary.

We apply the above considerations to a special equation (\ref{equiv1}), in the case $d=1$ in order to discuss its dependence on the sequences of coefficients:  
\begin{align}\label{pex}
\frac{\partial}{\partial t} w_n- P_n\left(\frac{\partial}{\partial x}\right)w_n=f_n, \quad n\in\mathbb{N}, 
\end{align}	
where  $(f_n)_n$ is a moderate sequence in $C^1([0,\infty),L^p(\mathbb{R}))$ and $P_n$ is a linear  differential  operator with constant coefficients belonging to $\mathcal E^M_{\mathbb C}$, of the form
\begin{align*} P_n(\partial/\partial x)=\alpha_{0,n}+i\beta_{0,n}+(\alpha_{1,n}+i\beta_{1,n})\partial/\partial x+(\alpha_{2,n}+i\beta_{2,n})(\partial/\partial x)^2,\quad n\in\mathbb{N}.
\end{align*}
We note that (G1) holds since all the domains $D_{P_n}, n\in\mathbb N$, are equal to the Sobolev space $W^{2,p}(\mathbb R)$. Since we are considering one dimensional case,  $P_n$ are elliptic, $n\in\mathbb N,$ so (A1) and (A2)  are fulfilled. A sufficient condition for the application of Proposition \ref{3} and Corollary \ref{34}, originating from  (A3) reads: 
\begin{align*}
\alpha_{2,n}\geq 0 \text{ and } \omega_n=\max\left\{0,\frac{4\alpha_{2,n}\alpha_{0,n}+\beta_{1,n}^2}{4\alpha_{2,n}}\right\}\leq \omega,\ n\in\mathbb{N},  \text{ for some }  \omega\in \mathbb{R}.
\end{align*} 
It shows that whenever $\beta_{1,n}=O(\sqrt{\alpha_{2,n}}), $ $\alpha_{2,n}>0$ and $\alpha_{0,n}=O(1)$ condition (A3) is satisfied. Then, directly using \cite{KH} Theorem 4.1, one has that g.e.i.s. $(S_n)_n$ is defined by $S_n (t)u(t,\cdot)=(2\pi)^{-1/2}(\mathcal{F}^{-1}\phi_{t,n})\ast u(t,\cdot),$ $n\in\mathbb{N}$ where $\phi_{t,n}(\xi):=\int_{0}^t e^{p_n(i\xi)s}ds,$ $\xi\in \mathbb{R},$ $ t>0$, see (\ref{HKe}).  Again one has the existence of a very weak solution for  (\ref{pex}).

Note that by  \cite{KH}, instead of $L^p(\mathbb{R})$ one can consider in (\ref{pex}) spaces: $ C_0(\mathbb{R}),$ $C_b(\mathbb{R}),$ $UC_b(\mathbb{R}).$ More generally, if one defines the space $\mathcal D_{E}$, where $E$ is one of quoted spaces, then by the adaptation of Proposition \ref{3} and Corollary \ref{34}, one obtains the corresponding g.e.i.s's. We refer to \cite{DPV} for the translation invariant spaces $\mathcal D_E$ and their duals. 
 
\begin{Remark}
We  can consider equation (\ref{eq **}) with
$a_n(\xi)=ic_n |\xi|^m,$ $\xi\in \mathbb{R}^d,$ $m\in \mathbb{R},$ $c_n\in \mathbb{R},$ $ |c_n|\leq c,$ $n\in\mathbb{N},$ with the similar definitions:  $({\rm Op}\ a_n)(u)=\mathcal F^{-1}(a_n\mathcal F u)$ and their  domains $D_n=D\subset L^p(\mathbb{R}^d),$ $n\in\mathbb{N}.$  Now as in \cite{ABHN 2011}, Example 8.2.5, let $m$ and $p$ satisfy conditions of Theorem 8.3.9 of \cite{ABHN 2011} (with $k=1$). Then the sequence  $(a_n)_n\in \mathcal{A}^{M}_3$ and it determines a g.e.i.s.  $(S_n)_n$.  We have a similar assertions as in Proposition  \ref{3} and Corollary \ref{34}, adapted to $(ic_n|\xi|^m)_n$, which will not be repeated. 
\end{Remark}                

\section{Associated sequences }
\label{s 4}

In this section we classify  infinitesimal generators and corresponding g.e.i.s.'s and we analyse the  relations of generalized infinitesimal generators and g.e.i.s.'s. Moreover, we introduce sequences associated to zero within algebras of Subsection \ref{ss not}.

The notion of association between  sequences is well understood in the literature related to the algebraic theory of  generalized function   \cite{C1984}, \cite{MMMR}. A sequence $(x_n)_n\in\mathcal E^M_X$ is associated to zero if $||x_n||_X \rightarrow 0$ as $n\rightarrow \infty$. Denote by $\mathcal{I}_{X}$ the space of elements of $\mathcal E^M_X$  which are associated  to zero. Such elements make a  subspace  of $\mathcal E^M_X$.  Similarly,  we define $\mathcal{I}_{\mathcal L(X)}$ as the space of sequences $(N_n)_n\in\mathcal E^M_{\mathcal L(X)}$ which converges to zero in $\mathcal L(X),$ as  $n\rightarrow \infty$; $\mathcal{I}_{\mathcal L(X)}$  is a subalgebra of $\mathcal E^M_{\mathcal L(X)}$ under the operation of composition.  A subspace of $\mathcal{C}^M_{\rm exp}([0,\infty);X),$  consisting of elements $(N_n(t))_n,$ $t\geq 0,$  with the property 
\begin{align*}
\sup_{t\geq 0} \|e^{-\omega t} N_n (t)\|_{X}\rightarrow 0,\quad n\to \infty, \mbox{ for some } \omega>0,
\end{align*}
is denoted by $\mathcal{I}_{\rm exp}([0,\infty);X).$  Analogously, we define a subspace $\mathcal{I}_{\rm exp}([0,\infty);\mathcal{L}(X))$ of the space $\mathcal{C}^M_{\rm exp}([0,\infty);\mathcal{L}(X))$ containing elements $(N_n(t))_n,$ $t\geq 0,$  such that for some $ \omega>0,$ $\sup_{t\geq 0} \|e^{-\omega t} N_n (t)\|_{\mathcal{L}(X)}\rightarrow 0,$ $n\to \infty.$ 

Two sequences in $\mathcal E^M_X$ or  $\mathcal E^M_{\mathcal L(X)}$ or $\mathcal{C}^M_{\rm exp}([0,\infty);X)$ or $\mathcal{C}^M_{\rm exp}([0,\infty);\mathcal{L}(X))$ are associated in these spaces, respectively,   if their difference converges to zero,  that is, belongs to the corresponding space $\mathcal{I}_{X}$ or  $\mathcal{I}_{\mathcal L(X)}$ or $\mathcal{I}_{\rm exp}([0,\infty);X)$ or $\mathcal{I}_{\rm exp}([0,\infty);\mathcal{L}(X)).$  In any of these spaces the association is the relation of equivalence. We will use in the sequel the symbol ''$\sim$'' if the difference of two elements is associated. 
 
\begin{Remark}
One can also define weak associative sequences of quoted algebras when one involve test functions in the definitions (cf. \cite{C1984}), as is it implicitly suggested in Section \ref{examples}, where we considered sequences with values in distribution spaces. 
\end{Remark}
 
Concerning generators, we add to conditions (G1),  (G2) and (G3) the following one:
\begin{enumerate}
\item[(G4):]  For every $\lambda>\omega$ there exist $0<c_1(\lambda)<c_2(\lambda)$ such that
\begin{align*}
\|R(\lambda,A_n)\|_{\mathcal L(X)}\in(c_1(\lambda),c_2(\lambda)), \quad n\in\mathbb{N}. 
\end{align*}
\end{enumerate}
Note that (G4) implies 
\begin{align*}
R(\lambda,A_n)y_n\rightarrow 0 \;\mbox{ if and only if } \; y_n\rightarrow 0, \; n\rightarrow \infty\; \text{for all}\; \lambda\in (\omega,\infty).
\end{align*}
We denote by $\mathcal{A}^{M}_4$, the set of sequences which satisfy (G1), (G2), (G3), (G4).  

\begin{Lemma}\label{N u N}
Let $A\in \mathcal{A}^{M}_4.$ If $(x_n)_n\in \mathbf{D}_A$ and $(x_n)_n\in \mathcal{I}_{X}$ then $(A_n x_n)_n\in \mathcal{I}_{X}.$
\end{Lemma}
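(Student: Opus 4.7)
The plan is to exploit the resolvent identity together with the equivalence stated immediately after condition (G4), namely that $R(\lambda,A_n)y_n\to 0$ if and only if $y_n\to 0$ for $\lambda\in(\omega,\infty)$. The natural choice is to set $y_n:=\lambda x_n-A_n x_n$ for a fixed $\lambda>\omega$, so that by the resolvent relation $R(\lambda,A_n)y_n=x_n$ we translate the question about $A_n x_n$ into one about the $y_n$'s.

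First I would check that $(y_n)_n=(\lambda x_n-A_n x_n)_n$ belongs to $\mathcal{E}_X^M$. This is immediate: by hypothesis $(x_n)_n\in\mathbf{D}_A$ gives $(x_n)_n,(A_n x_n)_n\in\mathcal{E}_X^M$, and $\mathcal{E}_X^M$ is closed under linear combinations. Second, since $x_n\in D=D_{A_n}$, one has $(\lambda I-A_n)x_n=y_n$, hence $R(\lambda,A_n)y_n=x_n$ for every $n\in\mathbb{N}$ (exactly the computation used in the converse part of Proposition \ref{pr21}).

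Third, combining the hypothesis $(x_n)_n\in\mathcal{I}_X$, i.e.\ $\|x_n\|_X\to 0$, with $R(\lambda,A_n)y_n=x_n$ yields $\|R(\lambda,A_n)y_n\|_X\to 0$. Invoking the equivalence following (G4), which asserts that $R(\lambda,A_n)y_n\to 0$ forces $y_n\to 0$ in $X$, we obtain $(y_n)_n\in\mathcal{I}_X$. Finally, writing
\[
A_n x_n \;=\; \lambda x_n \;-\; y_n,
\]
and using that both $(x_n)_n$ and $(y_n)_n$ are associated to zero, we conclude $\|A_n x_n\|_X\to 0$, i.e.\ $(A_n x_n)_n\in\mathcal{I}_X$.

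The only non-routine step is the passage $R(\lambda,A_n)y_n\to 0\Rightarrow y_n\to 0$; everything else is algebraic bookkeeping with the resolvent identity and moderate/negligible sequence spaces. This injectivity-in-the-limit property is precisely what (G4) is introduced for in the paper, and the proof does not require anything beyond that consequence of (G4). Choosing $\lambda$ arbitrary in $(\omega,\infty)$ is harmless since $\mathbf{R}_A=\mathbf{R}_{\lambda,A}$ by Corollary \ref{cor D=R}.
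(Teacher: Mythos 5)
Your proposal is correct and follows essentially the same route as the paper's own proof: set $y_n=\lambda x_n-A_n x_n$, note $(y_n)_n\in\mathcal{E}_X^M$, use $R(\lambda,A_n)y_n=x_n\in\mathcal{I}_X$ together with the consequence of (G4) to get $(y_n)_n\in\mathcal{I}_X$, and conclude from $A_n x_n=\lambda x_n-y_n$. The extra remarks (moderateness check, reference to the converse part of Proposition \ref{pr21}) only make explicit what the paper leaves implicit.
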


\begin{proof}
Denote by $y_n=\lambda x_n-A_n x_n,$ $n\in\mathbb{N}, $ $\lambda>\omega.$ Then  $(x_n)_n\in \mathbf{D}_A$ imply $(y_n)_n\in \mathcal{E}_X^M.$ Since $(R(\lambda,A_n)y_n)_n=(x_n)_n \in \mathcal{I}_X,$ according to (G4), one obtains $(y_n)_n\in \mathcal{I}_X.$ Finally, $A_n x_n=\lambda x_n - y_n, $ $\lambda>\omega,$ $n\in\mathbb{N}, $ implies $(A_n x_n)_n\in \mathcal{I}_X.$
\end{proof}

We introduce the relation of equivalence in $\mathcal{A}^{M}_4$: $(A_n)_n\simeq (\tilde A_n)_n$ if
\begin{enumerate}
\item[(GE1):] $D=\tilde D,$ where $D=D_{A_n}$ and $\tilde D=D_{\tilde A_n},$ $n\in\mathbb{N};$
\item[(GE2):] $\mathbf{D}_A=\mathbf{D}_{\tilde{A}};$
\item[(GE3):] $((A_n -\tilde A_n)x_n)_n\rightarrow 0$ in $X,$ as $n\to\infty,$ for every $(x_n)_n\in \mathbf{D}_A.$ 
\end{enumerate}

\noindent Note, if $(A_n)_n,$ $(\tilde A_n)_n $ in $\mathcal{A}^{M}_4$ then there always exists $\omega>0$ such that $(\omega,\infty)\subset \rho(A_n)\cap\rho(\tilde A_n),$ $n\in\mathbb{N}.$

We say that sequences of resolvents $(R(\lambda,A_n))_n$ and $(R(\lambda,\tilde A_n))_n, $ $\lambda>\omega,$ are associated,  and write 
\begin{align*}
(R(\lambda,A_n))_n \simeq  (R(\lambda,\tilde A_n))_n
\end{align*}
if for every $\lambda\in (\omega,\infty),$ 
\begin{enumerate}
\item[(RE1):] $R={\rm range\,}R(\lambda,A_n)={\rm range\,}R(\lambda,\tilde A_n),$ $n\in\mathbb{N};$ 
\item[(RE2):] $\mathbf{R}_{A}=\mathbf{R}_{\tilde A};$ 
\item[(RE3):] $((R(\lambda,A_n)-R(\lambda,\tilde A_n))y_n)_n\rightarrow 0$ in $X,$ as $n\to\infty,$ for every $(y_n)_n\in \mathcal{E}_X^M.$ 
\end{enumerate}

\begin{Theorem}\label{prop 1}
Let $(A_n)_n,(\tilde A_n)_n\in \mathcal{A}^{M}_4$ and  $(R(\lambda,A_n))_n, (R(\lambda,\tilde A_n))_n\in \mathcal{E}^M_{\mathcal{L}(X)}, $ $\lambda>\omega,$ be corresponding resolvents.  

Then $(A_n)_n \simeq(\tilde A_n)_n$ if and only if $(R(\lambda,A_n))_n  \simeq (R(\lambda,\tilde A_n))_n,$ $\lambda\in (\omega,\infty).$
\end{Theorem}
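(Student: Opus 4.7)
The plan is to match the three defining clauses of the generator-side equivalence with those on the resolvent side, using Corollary~\ref{cor D=R} to identify $\mathbf{D}_A$ with $\mathbf{R}_A$ (and similarly for $\tilde A$), and exploiting the algebraic identity
\begin{equation*}
R(\lambda,A_n)-R(\lambda,\tilde A_n) = R(\lambda,A_n)\bigl(A_n-\tilde A_n\bigr) R(\lambda,\tilde A_n),
\end{equation*}
valid for $\lambda\in(\omega,\infty)\subset\rho(A_n)\cap\rho(\tilde A_n)$ once the domains coincide.  Hypothesis (G4) will enter in two opposite guises: the upper bound $\|R(\lambda,A_n)\|_{\mathcal{L}(X)}\le c_2(\lambda)$ for the implication $\Rightarrow$, and the ``backward'' consequence (as already used in Lemma~\ref{N u N}) that $R(\lambda,\tilde A_n)z_n\to 0$ forces $z_n\to 0$, for the implication $\Leftarrow$.

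For $(\Rightarrow)$ I would argue as follows.  Since the range of every resolvent $R(\lambda,A_n)$ equals $D_{A_n}$, clause (GE1) immediately gives (RE1), and two applications of Corollary~\ref{cor D=R} produce $\mathbf{R}_A=\mathbf{D}_A=\mathbf{D}_{\tilde A}=\mathbf{R}_{\tilde A}$, i.e.\ (RE2).  For (RE3), given $(y_n)_n\in\mathcal{E}_X^M$, I set $x_n:=R(\lambda,A_n)y_n$ and $\tilde x_n:=R(\lambda,\tilde A_n)y_n$; rearranging $(\lambda-A_n)x_n=y_n=(\lambda-\tilde A_n)\tilde x_n$ yields
\begin{equation*}
x_n-\tilde x_n = R(\lambda,A_n)(A_n-\tilde A_n)\tilde x_n .
\end{equation*}
Since $(\tilde x_n)_n\in\mathbf{R}_{\tilde A}=\mathbf{D}_{\tilde A}=\mathbf{D}_A$ by (GE2), clause (GE3) forces $(A_n-\tilde A_n)\tilde x_n\to 0$, and the uniform upper bound from (G4) closes the estimate.

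For $(\Leftarrow)$ the first two clauses mirror the previous step: the bijection between $X$ and $D_{A_n}$ afforded by $R(\lambda,A_n)$ makes (RE1) equivalent to (GE1), and Corollary~\ref{cor D=R} again delivers (GE2).  For (GE3), any $(x_n)_n\in\mathbf{D}_A=\mathbf{D}_{\tilde A}$ can be written as $x_n=R(\lambda,A_n)y_n=R(\lambda,\tilde A_n)\tilde y_n$ with $y_n:=\lambda x_n-A_nx_n$ and $\tilde y_n:=\lambda x_n-\tilde A_nx_n$ lying in $\mathcal{E}_X^M$, and
\begin{equation*}
\bigl(R(\lambda,A_n)-R(\lambda,\tilde A_n)\bigr)y_n = R(\lambda,\tilde A_n)(\tilde y_n-y_n) = R(\lambda,\tilde A_n)(A_n-\tilde A_n)x_n
\end{equation*}
tends to zero by (RE3).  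The main obstacle is the remaining step: to recover $(A_n-\tilde A_n)x_n\to 0$ from the vanishing of its image under $R(\lambda,\tilde A_n)$.  This is precisely the ``backward'' content of (G4) exploited in Lemma~\ref{N u N}, and it is the reason the whole equivalence is phrased inside $\mathcal{A}_4^M$ rather than the weaker $\mathcal{A}_3^M$.
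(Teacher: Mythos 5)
Your proposal is correct and follows essentially the same route as the paper's proof: (GE1)--(GE2) are matched to (RE1)--(RE2) via Corollary~\ref{cor D=R}, and the third clauses are handled by the same resolvent-identity manipulations (your identities $x_n-\tilde x_n=R(\lambda,A_n)(A_n-\tilde A_n)\tilde x_n$ and $(R(\lambda,A_n)-R(\lambda,\tilde A_n))y_n=R(\lambda,\tilde A_n)(A_n-\tilde A_n)x_n$ are just symmetric rearrangements of the ones in the paper), with the upper bound from (G4) used for $(\Rightarrow)$ and its ``backward'' consequence, as in Lemma~\ref{N u N}, used for $(\Leftarrow)$.
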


\begin{proof} 
\begin{enumerate}
\item[$(\Rightarrow)$] Let us first assume that $(A_n)_n\simeq (\tilde A_n)_n.$ Since $D_{A_n}=D_{\tilde A_n}=D,\,$ $n\in\mathbb{N},$ one directly obtains that ${\rm range\;}R(\lambda,A_n)={\rm range\;}R(\lambda,\tilde A_n)=D$ for any $\lambda\in (\omega,\infty)$ and $n\in\mathbb{N}.$ Also,  Corollary \ref{cor D=R} shows that $\mathbf{D}_A=\mathbf{D}_{\tilde A}$ implies $\mathbf{R}_A=\mathbf{R}_{\tilde A}.$

Let  $(y_n)_n \in \mathcal{E}_X^M$ arbitrary and $\lambda>\omega.$ Denote by
\begin{align*}
(R(\lambda,A_n)y_n)_n=(x_n)_n, \; (R(\lambda,\tilde A_n)y_n)_n=(\tilde x_n)_n\in \mathbf{R}_A=\mathbf{D}_A.
\end{align*} 
This implies that
\begin{align*}
y_n=(\lambda I-A_n)x_n=(\lambda I-\tilde A_n)\tilde x_n,\quad n\in\mathbb{N}.
\end{align*}
Now we infer
\begin{align*}
\lambda(x_n-\tilde x_n) & =A_n x_n -\tilde{A}_n \tilde x_n = A_n x_n -\tilde{A}_n \tilde x_n +\tilde A_n x_n - \tilde A_n x_n \\
& = (A_n-\tilde A_n)x_n +\tilde A_n(x_n - \tilde x_n)
\end{align*}
and
\begin{align*}
(\lambda I-\tilde A_n)(x_n-\tilde x_n)=(A_n-\tilde A_n)x_n.
\end{align*}
Since $((A_n -\tilde A_n) x_n)_n\in \mathcal{I}_X$ one obtains $((\lambda I-\tilde A_n)(x_n-\tilde x_n))_n\in \mathcal{I}_X.$ Applying $(R(\lambda,\tilde A_n))_n$ and using (G4) one obtains
\begin{align*} 
(x_n)_n \sim (\tilde x_n)_n \quad \Leftrightarrow \quad (R(\lambda,A_n)y_n)_n \sim (R(\lambda,\tilde A_n)y_n)_n.
\end{align*}
So one obtains $(R(\lambda,A_n))_n\simeq (R(\lambda,\tilde A_n))_n,$ $\lambda\in (\omega,\infty).$

\item[$(\Leftarrow)$] Now, let $(R(\lambda,A_n))_n\simeq(R(\lambda,\tilde A_n))_n,$ $ \lambda\in (\omega,\infty).$ Clearly, $D_{A_n}=D_{\tilde A_n}=D,$ $n\in\mathbb{N},$ since  $ {\rm range\;}R(\lambda,A_n)={\rm range\;}R(\lambda,\tilde A_n)=D,$ $n\in\mathbb{N}.$ Corollary \ref{cor D=R} implies $\mathbf{D}_A=\mathbf{D}_{\tilde{A}}.$

Finally, let us show that (GE3) holds.  Let $(x_n)_n\in \mathbf{D}_A,$ be given and denote by $(y_n)_n = (A_n x_n)_n,\;(\tilde y_n)_n = (\tilde A_n x_n)_n\in  \mathcal{E}_X^M.$ Then, for $\lambda>\omega,$
\begin{align*}
& (\lambda I-A_n)x_n=\lambda x_n - y_n \quad \Rightarrow \quad x_n = \lambda R(\lambda, A_n)x_n - R(\lambda, A_n)y_n\\
& (\lambda I-\tilde A_n)x_n=\lambda x_n - \tilde y_n \quad \Rightarrow \quad x_n = \lambda R(\lambda, \tilde A_n)x_n - R(\lambda, \tilde A_n)\tilde y_n.
\end{align*}
Next, for $\lambda>\omega,$
\begin{align*}
R(\lambda, \tilde A_n)\tilde y_n-R(\lambda, A_n) y_n & = \lambda R(\lambda,\tilde{A}_n)x_n - \lambda R(\lambda, A_n)x_n,\\
&=\left( R(\lambda, \tilde A_n)- R(\lambda, A_n)\right)(\lambda x_n).
\end{align*}
This relation and the assumption $(R(\lambda,A_n))_n\simeq(R(\lambda,\tilde A_n))_n,$ $\lambda>\omega$ imply that  $(R(\lambda, \tilde A_n)\tilde y_n)_n\sim (R(\lambda, A_n) y_n)_n.$ On the other hand,  since 
\begin{align*}
R(\lambda, \tilde A_n)\tilde y_n-R(\lambda, A_n) y_n & =R(\lambda, \tilde A_n)\tilde y_n-R(\lambda, A_n) y_n \pm R(\lambda,\tilde A_n)y_n\\
& = R(\lambda,\tilde A_n)(\tilde y_n-y_n)+(R(\lambda,\tilde A_n)-R(\lambda,A_n))y_n,
\end{align*}
we conclude that
\begin{align*}
(R(\lambda,\tilde A_n)\tilde y_n)_n \sim (R(\lambda,\tilde A_n) y_n)_n, \quad\lambda\in(\omega,\infty).
\end{align*}
Thus,  Lemma \ref{N u N} implies
\begin{align*}
((\lambda I- \tilde A_n)R(\lambda,\tilde A_n)\tilde y_n)_n \sim ((\lambda I- \tilde A_n)R(\lambda,\tilde A_n) y_n)_n \quad \Leftrightarrow \quad (\tilde y_n)_n \sim (y_n)_n.
\end{align*}
This means $ (A_n )_n \simeq (\tilde A_n)_n.$
\qedhere
\end{enumerate}
\end{proof}

\subsection{Relations between generators and g.e.i.s.'s}

We define the relation of equivalence for g.e.i.s.'s in the sense of  association: 

\begin{definition}\label{sg rel}
Let $(S_n)_n$ and $(\tilde S_n)_n$ be g.e.i.s.'s determined by $(A_n)_n,(\tilde A_n)_n\in \mathcal{A}^{M}_4.$  Then $(S_n)_n \simeq (\tilde S_n)_n$ if $((S_n-\tilde S_n)x_n)_n\in \mathcal{I}_{\rm exp}([0,\infty);X)$ for any $(x_n)_\in\mathcal{E}^M_X$ and the sequences of resolvents  $(R(\lambda,A_n))_n$ and $(R(\lambda,\tilde A_n))_n$ satisfy (RE1) and (RE2).
\end{definition}

\begin{Theorem}\label{T sg ka ig} 
Assume $(S_n)_n \simeq (\tilde S_n)_n.$ Then their infinitesimal generators  satisfy  $(A_n)_n\simeq (\tilde A_n)_n.$
\end{Theorem}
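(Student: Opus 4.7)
The plan is to invoke Theorem \ref{prop 1} in order to reduce the assertion $(A_n)_n \simeq (\tilde A_n)_n$ to the equivalence of the associated resolvent sequences. Since Definition \ref{sg rel} already incorporates conditions (RE1) and (RE2) into the hypothesis $(S_n)_n \simeq (\tilde S_n)_n$, the only thing that needs verification is (RE3), namely that
\[
\bigl((R(\lambda, A_n) - R(\lambda, \tilde A_n)) y_n\bigr)_n \in \mathcal{I}_X
\]
for every $(y_n)_n \in \mathcal{E}^M_X$ and every $\lambda$ in a suitable half-line $(\omega,\infty)$.

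The natural approach is to exploit the Laplace transform representation \eqref{rez}, which gives
\[
(R(\lambda, A_n) - R(\lambda, \tilde A_n)) y_n = \lambda \int_0^\infty e^{-\lambda t} (S_n(t) - \tilde S_n(t)) y_n \, dt.
\]
The hypothesis $((S_n - \tilde S_n) y_n)_n \in \mathcal{I}_{\rm exp}([0, \infty); X)$ supplies an $\omega_0 > 0$ such that $\varepsilon_n := \sup_{t \geq 0} \|e^{-\omega_0 t} (S_n(t) - \tilde S_n(t)) y_n\|_X \to 0$ as $n \to \infty$. Splitting $e^{-\lambda t} = e^{-(\lambda - \omega_0)t} e^{-\omega_0 t}$, pulling the supremum out of the integral, and computing $\int_0^\infty e^{-(\lambda-\omega_0)t}\,dt = (\lambda-\omega_0)^{-1}$, I would obtain
\[
\|(R(\lambda, A_n) - R(\lambda, \tilde A_n)) y_n\|_X \leq \frac{\lambda \varepsilon_n}{\lambda - \omega_0}
\]
for every $\lambda > \max(\omega, \omega_0)$, and the right-hand side tends to $0$ as $n \to \infty$. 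This is precisely (RE3) along the required half-line.

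The main, though very mild, technical obstacle is reconciling the various $\omega$'s in play: the one appearing in (G2) that controls the resolvent set, and the $\omega_0$ produced by the definition of $\mathcal{I}_{\rm exp}$. However, conditions (G1)-(G4) remain valid when $\omega$ is replaced by any larger value (the half-line shrinks but still lies in each $\rho(A_n)\cap\rho(\tilde A_n)$ and the resolvent bounds of (G3), (G4) are inherited), so one may simply enlarge $\omega$ to exceed $\omega_0$ before invoking Theorem \ref{prop 1}. With (RE1), (RE2), (RE3) all established on this enlarged half-line, Theorem \ref{prop 1} yields $(A_n)_n \simeq (\tilde A_n)_n$, which is the claim.
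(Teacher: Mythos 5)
Your proposal is correct and follows essentially the same route as the paper's proof: reduce to resolvent association via Theorem~\ref{prop 1}, note that (RE1)--(RE2) are built into Definition~\ref{sg rel}, and verify (RE3) by bounding $\lambda\int_0^\infty e^{-\lambda t}(S_n(t)-\tilde S_n(t))x_n\,dt$ using the exponential weight from $\mathcal{I}_{\rm exp}$. Your explicit remark about enlarging $\omega$ to exceed $\omega_0$ is a small point of care that the paper glosses over, but it does not change the argument.
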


\begin{proof}
Let us prove that $((R(\lambda,A_n)-R(\lambda,\tilde A_n))x_n)_n\in \mathcal{I}_{X}$ for any $(x_n)_n\in\mathcal{E}_X^M.$ By Proposition  \ref{prop 1}, this implies that $(A_n)_n \simeq (\tilde{A}_n)_n.$ Let $\lambda>\omega$ and $(x_n)_n\in\mathcal{E}_X^M$ be fixed.  Then,
\begin{align*}
\left\|(R(\lambda,A_n)-R(\lambda,\tilde A_n))x_n\right\|_{X} & = \left\|\lambda \int_0^\infty e^{-\lambda t}\left(S_n(t)-\tilde{S}_n(t) \right)x_n\,dt \right\|_{X}\\
& \leq \sup_{t\geq 0}\|e^{-\omega t}(S_n(t)-\tilde{S}_n(t))x_n \|_{X} \left|\lambda \int_0^\infty e^{-(\lambda-\omega)t} dt \right| \\
& \leq C\sup_{t\geq 0}\|e^{-\omega t}(S_n(t)-\tilde{S}_n(t))x_n \|_{X}\to 0,\quad n\to\infty.
\end{align*}
This completes the proof.
\end{proof}

Next, we introduce sets of sequences of strong infinitesimal generators denoted by $\mathcal{A}^{M}_5.$ Denote by (G5) assumption \eqref{G5}, i.e.
\begin{itemize}
\item[(G5):] $\sup_{\rm Re\,\lambda>\omega}\| \lambda^b R(\lambda,A_n)\|_{\mathcal{L}(X)} \leq M_n,$ $n\in\mathbb{N},$ for some  $b>0$ and $(M_n)_n\in \mathcal{E}^M_\mathbb{R}.$
\end{itemize}

We say that a sequence $(A_n)_n$ is a sequence of strong infinitesimal generators,  that is $(A_n)_n\in\mathcal{A}^{M}_5$,  if $(A_n)_n\in \mathcal{A}^{M}_4$ and (G5) holds. Note that if $(A_n)_n$ satisfies (G5) then it also satisfies condition (G3).

Let us introduce a relation of equivalence in $\mathcal{A}^{M}_5$ as follows:  $(A_n)_n\simeq_0(\tilde A_n)_n$ if $(A_n)_n\simeq(\tilde A_n)_n$ and
\begin{enumerate}
\item[(GE4)]  there exists $b>0$ such that $\sup\limits_{\rm Re\,\lambda>\omega}\left\| \lambda^b \left(R(\lambda,A_n)-R(\lambda,\tilde A_n)\right)x_n\right\|_{X}\rightarrow 0,$ $n\to \infty,$ for all $(x_n)_n\in\mathcal{E}^M_X.$
\end{enumerate}

\begin{Theorem}
If $(A_n)_n\simeq_0(\tilde A_n)_n$ in $\mathcal{A}^{M}_5,$ then $(S_n)_n\simeq(\tilde S_n)_n.$
\end{Theorem}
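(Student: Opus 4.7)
The plan is to reduce the statement to the same contour-integral construction used in the proof of Theorem \ref{th 1}, but applied to the resolvent difference $R(\lambda, A_n) - R(\lambda, \tilde A_n)$ evaluated on a fixed moderate sequence $(x_n)_n \in \mathcal{E}^M_X$ rather than to the resolvent of a single operator. First observe that $\simeq_0$ refines $\simeq$ on generators, hence $(A_n)_n \simeq (\tilde A_n)_n$, so by Theorem \ref{prop 1} the resolvents are associated; this supplies (RE1) and (RE2), which are two of the three ingredients Definition \ref{sg rel} requires for $(S_n)_n \simeq (\tilde S_n)_n$. What remains is to verify $((S_n - \tilde S_n)x_n)_n \in \mathcal{I}_{\rm exp}([0,\infty);X)$ for an arbitrary $(x_n)_n \in \mathcal{E}^M_X$.

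Fix such $(x_n)_n$ and set
\[
\epsilon_n := \sup_{\mathrm{Re}\,\lambda > \omega} \bigl\| \lambda^b \bigl(R(\lambda, A_n) - R(\lambda, \tilde A_n)\bigr) x_n \bigr\|_X,
\]
so that (GE4) gives $\epsilon_n \to 0$. From Definition \ref{def g.i.g.} we have $R(\lambda, A_n)/\lambda = \int_0^\infty e^{-\lambda t} S_n(t)\,dt$ and the analogous identity for $\tilde S_n$; consequently the Laplace transform of the continuous $X$-valued map $t \mapsto (S_n(t) - \tilde S_n(t))x_n$ is $q_n(\lambda) := \lambda^{-1}(R(\lambda, A_n) - R(\lambda, \tilde A_n))x_n$, and this vector-valued function satisfies the pointwise analogue of (G5) with the null sequence $\epsilon_n$ in place of the moderate bound $M_n$, namely $\|\lambda^{b+1} q_n(\lambda)\|_X \leq \epsilon_n$.

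I would then re-run, verbatim, the contour-deformation computation of Theorem 2.5.1 of \cite{ABHN 2011} that was invoked in the proof of Theorem \ref{th 1}, but applied to $q_n$ (which is analytic on the half-plane $\mathrm{Re}\,\lambda > \omega$ and satisfies the required polynomial growth). The resulting inequality depends on the input only through the scalar $\sup_\lambda \|\lambda^{b+1} q_n(\lambda)\|$, so after substituting $\epsilon_n$ for $M_n$ and taking $R = 1/t$ one arrives at
\[
\|(S_n(t) - \tilde S_n(t))x_n\|_X \leq C \epsilon_n\, t^b e^{\omega t}, \quad t > 0,
\]
with $C$ independent of $n$ and $t$. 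Picking any $\omega_1 > \omega + b$ yields
\[
\sup_{t \geq 0} \|e^{-\omega_1 t}(S_n(t) - \tilde S_n(t))x_n\|_X \leq C \epsilon_n \sup_{t \geq 0} t^b e^{-(\omega_1 - \omega)t} \longrightarrow 0,
\]
which is the defining condition of $\mathcal{I}_{\rm exp}([0,\infty);X)$. Combined with (RE1) and (RE2) already secured above, this proves $(S_n)_n \simeq (\tilde S_n)_n$.

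The only delicate point is the justification that the ABHN contour argument transfers from an operator-norm resolvent estimate to the pointwise estimate on $(R(\lambda, A_n) - R(\lambda, \tilde A_n))x_n$; but the argument in \cite{ABHN 2011} is genuinely pointwise in $X$, producing from any analytic Banach-space valued function on a right half-plane with a polynomial bound of order $b$ an inverse Laplace transform controlled by that single scalar. Hence applying it to $q_n$ reproduces the bound above, and no further work is required beyond tracking the constant, which is where the passage from $M_n$ to $\epsilon_n$ makes the difference between a moderate and a vanishing estimate.
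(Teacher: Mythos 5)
Your proof is correct and takes essentially the same route as the paper: both apply the contour-integral estimate from Theorem 2.5.1 of \cite{ABHN 2011} to the vector-valued function $\lambda^{-1}(R(\lambda,A_n)-R(\lambda,\tilde A_n))x_n$ and use (GE4) to send the resulting bound $M_n$ (your $\epsilon_n$) to zero. You are in fact slightly more explicit than the paper in securing (RE1) and (RE2) via Theorem \ref{prop 1}, which Definition \ref{sg rel} formally requires.
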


\begin{proof}
Assumption (G5) and Theorem \ref{th 1} imply that $(A_n)_n$ and $(\tilde A_n)_n$ generate g.e.i.s.'s $(S_n)_n$ and $(\tilde S_n)_n,$ respectively.  Next, as in the proof of Theorem 2.5.1 in \cite{ABHN 2011},
\begin{equation*}
\|(S_n(t)-\tilde{S}_n(t))x_n\|_{X}\leq \frac{M_n e^{\alpha t}}{\pi}\int_R^\infty \frac{dr}{r^{1+b}}+\frac{M_n e^{\alpha t} }{\pi R^b}\int_0^{\pi/2} e^{Rt\cos\theta} \,d\theta,\; n\in\mathbb{N},\; t>0,
\end{equation*}
where
\begin{equation*}
M_n=\sup_{{\rm Re} \lambda>\omega}\|\lambda^b(R(\lambda,A_n)-R(\lambda,\tilde A_n))x_n\|_{X}.
\end{equation*}
Now assumption (GE4) above gives that $M_n\to 0,$ $n\to\infty.$ This finishes the proof.
\end{proof}

We recall a one more known assumption for integrated semigroups (called theoretically important, in \cite{N}, p.128).  Let a sequence $(A_n)_n\in\mathcal{A}^{M}_4$ be such that the common  domain $D$ of operators $A_n,$ $n\in\mathbb{N},$ given in (G1), is dense in $X$ ($\overline D=X$).  Assume
\begin{align}\label{d1}
\sup_{k\in \mathbb{N}_0}\sup_{\lambda>\omega}\| (\lambda-\omega)^{k+1}(R(\lambda,A_n)/\lambda)^{(k)}/k!\|_{\mathcal{L}(X)}\leq M_n, \quad n\in\mathbb{N},
\end{align}
for some $(M_n)_n\in\mathcal{E}^M_{\mathbb{R}}. $
Then, by Theorem 3.3.2 in \cite{ABHN 2011},  the sequence $(A_n)_n$ generate a sequence of exponentially bounded integrated semigroups $(S_n)_n$ such that $\|S_n(t)\|_{\mathcal{L}(X)} \leq M_n e^{\omega t};$ so $(S_n)_n\in \mathcal{C}^M_{\rm exp}([0,\infty);\mathcal{L}(X)).$
 
This classical result implies the following assertion.
 
\begin{Proposition}
Let $(A_n)_n,(\tilde A_n)_n\in\mathcal{A}^{M}_4$ satisfy \eqref{d1},  so that the common domain of $A_n,$ $\tilde A_n,$ $n\in\mathbb{N},$ $D$ is dense in $X.$ Assume that for all $(x_n)_n\in\mathcal{E}^M_X$
\begin{align}\label{d2}
\sup_{k\in \mathbb{N}_0}\sup_{\lambda>\omega}\| (\lambda-\omega)^{k+1}(((R(\lambda,A_n)-R(\lambda,\tilde A_n))/\lambda)^{(k)}x_n )/k!\|_{X}\rightarrow 0, \  n\rightarrow \infty.
\end{align}
Then  $(S_n)_n\simeq (\tilde S_n)_n.$
\end{Proposition}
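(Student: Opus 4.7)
The plan is to apply the vector-valued Widder--Arendt representation theorem (the engine behind the cited Theorem 3.3.2 in \cite{ABHN 2011}) to the resolvent differences, viewed as $X$-valued analytic functions evaluated along a fixed moderate sequence $(x_n)_n$.

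\textbf{Setup.} Theorem 3.3.2 of \cite{ABHN 2011} applied to each of $(A_n)_n$ and $(\tilde A_n)_n$ (using \eqref{d1} and density of $D$) produces g.e.i.s.'s $(S_n)_n$ and $(\tilde S_n)_n$ with $\|S_n(t)\|_{\mathcal{L}(X)},\|\tilde S_n(t)\|_{\mathcal{L}(X)}\le M_n e^{\omega t}$. From the assumed common domain $D_{A_n}=D_{\tilde A_n}=D$ we read off $\mathrm{range}\,R(\lambda,A_n)=\mathrm{range}\,R(\lambda,\tilde A_n)=D$, which is (RE1). For (RE2), Corollary \ref{cor D=R} reduces $\mathbf R_A=\mathbf R_{\tilde A}$ to $\mathbf D_A=\mathbf D_{\tilde A}$; the inclusion $\mathbf D_A\subseteq \mathbf D_{\tilde A}$ follows by applying \eqref{d2} with $k=0$ to the sequence $y_n:=\lambda x_n-A_n x_n$, which gives $(R(\lambda,A_n)y_n-R(\lambda,\tilde A_n)y_n)_n\to 0$, whence the identity $\tilde A_n x_n=\lambda x_n-y_n-(\lambda I-\tilde A_n)(R(\lambda,A_n)-R(\lambda,\tilde A_n))y_n$ together with (G4) yields moderateness of $(\tilde A_n x_n)_n$; the reverse inclusion is symmetric.

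\textbf{Main estimate.} Fix $(x_n)_n\in\mathcal E^M_X$ and set
\begin{align*}
q_n(\lambda):=\bigl(R(\lambda,A_n)-R(\lambda,\tilde A_n)\bigr)x_n/\lambda,\qquad \lambda>\omega.
\end{align*}
By hypothesis \eqref{d2}, the quantity
\begin{align*}
K_n:=\sup_{k\in\mathbb{N}_0}\sup_{\lambda>\omega}\bigl\|(\lambda-\omega)^{k+1}q_n^{(k)}(\lambda)/k!\bigr\|_X
\end{align*}
tends to $0$ as $n\to\infty$. The $X$-valued Widder--Arendt theorem (the tool used in the proof of Theorem 3.3.2 of \cite{ABHN 2011}) produces a continuous function $f_n:[0,\infty)\to X$ with $\sup_{t\ge 0}\|e^{-\omega t}f_n(t)\|_X\le K_n$ whose Laplace transform is $q_n$.

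\textbf{Identification and conclusion.} From Definition \ref{def g.i.g.} applied to both $(S_n)_n$ and $(\tilde S_n)_n$, one obtains
\begin{align*}
q_n(\lambda)=\int_0^\infty e^{-\lambda t}\bigl(S_n(t)-\tilde S_n(t)\bigr)x_n\,dt,\quad \lambda>\omega,
\end{align*}
so by uniqueness of the Laplace transform $f_n(t)=(S_n(t)-\tilde S_n(t))x_n$ for all $t\ge 0$. Consequently
\begin{align*}
\sup_{t\ge 0}\bigl\|e^{-\omega t}(S_n(t)-\tilde S_n(t))x_n\bigr\|_X\le K_n\longrightarrow 0,\quad n\to\infty,
\end{align*}
which is $((S_n-\tilde S_n)x_n)_n\in\mathcal I_{\rm exp}([0,\infty);X)$. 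Combined with (RE1) and (RE2) from the setup, this gives $(S_n)_n\simeq(\tilde S_n)_n$.

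The principal obstacle is the legitimacy of invoking the vector-valued Widder--Arendt step uniformly in $n$: one must verify that the reconstruction in Theorem 3.3.2 of \cite{ABHN 2011} is genuinely linear in the data $q$ (so that it can be applied to the difference $R(\lambda,A_n)-R(\lambda,\tilde A_n)$ rather than to a single pseudoresolvent) and that the resulting sup-norm bound depends only on the constant $K_n$. A secondary technicality is the verification of $\mathbf D_A=\mathbf D_{\tilde A}$ sketched in the setup, which hinges on (G4) to translate smallness of resolvent differences back to smallness of the corresponding preimages.
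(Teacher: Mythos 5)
Your proof is correct and follows essentially the same route as the paper's: both apply the vector-valued Widder--Arendt representation theorem (Theorem 2.4.2 of the cited monograph) to $q_n(\lambda)=(R(\lambda,A_n)-R(\lambda,\tilde A_n))x_n/\lambda$, obtain an $X$-valued function bounded by $K_n e^{\omega t}$ with $K_n\to 0$, and identify it with $(S_n(t)-\tilde S_n(t))x_n$ via uniqueness of the Laplace transform. Your explicit verification of (RE1)--(RE2) is a welcome addition the paper omits, and the ``principal obstacle'' you flag is moot, since that theorem is a characterization valid for arbitrary $X$-valued functions satisfying the Widder condition, not only pseudoresolvents.
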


\begin{proof}
We know, by Theorem 3.3.2 in \cite{ABHN 2011}, that $(A_n)_n$ and $(\tilde A_n)_n$ generate g.e.i.s.'s $(S_n)_n$ and $(\tilde S_n)_n.$  Let $(x_n)_n\in\mathcal{E}^M_X$ and let $m_n,$ $n\in\mathbb{N}$ denote, in \eqref{d2},
\begin{align*}
\sup_{k\in \mathbb{N}_0}\sup_{\lambda>\omega}\| (\lambda-\omega)^{k+1}(((R(\lambda,A_n)-R(\lambda,\tilde A_n))/\lambda)^{(k)}x_n )/k!\|_{X}\leq m_n \rightarrow 0, \quad n\rightarrow \infty.
\end{align*}
Now, Theorem 2.4.2 in \cite{ABHN 2011} implies that there exists $(g_n)_n\in(L^1_{\rm loc}([0,\infty),X))^\mathbb{N}$ so that
\begin{align*}
\frac{(R(\lambda,A_n)-R(\lambda,\tilde{A_n}))x_n}{\lambda}=\int_0^\infty e^{-\lambda t}g_n(t)\ dt,\quad \lambda>\omega,
\end{align*}
and $\|g_n(t)\|_X\leq m_ne^{\omega t},$ $t\geq 0,$ $n\in\mathbb{N}.$

Also, we know that there exist $(H_n)_n$ and $(\tilde H_n)_n$ in $(L^1_{\rm loc}([0,\infty),X))^\mathbb{N}$ determined by $R(\lambda,A_n)$ and $R(\lambda,\tilde A_n)$ so that 
\begin{align*}
\frac{R(\lambda,A_n)x_n}{\lambda}-\frac{R(\lambda,\tilde{A_n})x_n}{\lambda}=\int_0^\infty e^{-\lambda t}(H_n(t)-\tilde H_n(t))\ dt=\int_0^\infty e^{-\lambda t}g_n(t)\ dt.
\end{align*}
Thus, by the uniqueness of the Laplace transform, we have $\|H_n(t)-\tilde H_n(t)\|_X\leq m_n e^{\omega t},$ $n\in\mathbb{N}.$ With $H_n(t)=S_n(t)x_n$ and $\tilde H_n(t)=\tilde S_n(t)x_n,$ $n\in \mathbb{N},$ we complete the assertion.
\end{proof}

\subsection{Perturbations}

We finish the paper with the result concerning the perturbations. It directly follows from the corresponding one in  \cite{KH}, Section 3. The proof is omitted.

Let $(A_n)_n\in \mathcal{A}^{M}_5,$  be a sequence of infinitesimal generators of g.e.i.s $(S_n)_n\in \mathcal{C}^M_{\text{exp}}([0,\infty);\mathcal{L}(X)).$ Let $(B_n)_n\in \mathcal{E}^M_{\mathcal{L}(\bar D)}$ so that $||B_n||_{\mathcal{L}(\bar D)}\leq C, $ $n\in \mathbb N$ for some $C>0$ ($\bar D$ is the closure of $D=D_{A_n},$ $ n\in\mathbb{N}$). Assume that there exists $\lambda_0$ such that
\begin{align*}
B_nR(\lambda,A_n)=R(\lambda,A_n)B_n, \quad \lambda>\lambda_0, \quad n\in\mathbb N.
\end{align*}
Let (as in \cite{KH})
\begin{equation}\label{eq tacka}
S^{B_n}_n (t)=e^{tB_n}S_n (t)-B_n \int_0^t e^{sB_n}S_n (s)\ ds,\quad t>0,\;\;  n\in\mathbb{N}.
\end{equation}
Then we have the next adaptation of Proposition 3.1 in \cite{KH}.

\begin{Proposition} Let $(A_n)_n\in\mathcal{A}^{M}_5$ and $(B_n)_n\in \mathcal{E}^M_{\mathcal{L}(\bar D)}$ satisfy all assumptions given above.
\begin{enumerate}
\item $(A_n +B_n)_n\in \mathcal{A}^{M}_5. $  It is a sequence of infinitesimal generators of $(S^{B_n}_n)_n$ given by \eqref{eq tacka} and $(S_n^{B_n})_n$ is g.e.i.s.
\item Let $(C_n)_n\in \mathcal{I}_{\mathcal{L}(\bar D)}$ such that $C_nR(\lambda,A_n)=R(\lambda,A_n)C_n,$ $\lambda>\lambda_0,$ $n\in\mathbb{N},$ and $\tilde B_n=B_n+C_n,$ $n\in\mathbb{N}.$ Then ${(S_n^{B_n})}_n \simeq (S_n^{\tilde B_n})_n$ and ${(A_n +B_n)}_n \simeq_0{(A_n+\tilde B_n)}_n.$
\item Let $(A_n)_n,{(\tilde A_n)}_n\in \mathcal{A}^{M}_5.$ Then 
\begin{align*}
(A_n)_n \simeq_0 {(\tilde A_n)_n}\quad  \Rightarrow\quad  
{(S^{B_n}_n)_n} \simeq (\tilde S^{B_n}_n)_n.
\end{align*}
\end{enumerate}
\end{Proposition}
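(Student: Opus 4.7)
The proof combines three ingredients: the pointwise perturbation theorem of Kellermann--Hieber \cite{KH}, the uniform bound $\|B_n\|_{\mathcal{L}(\bar D)}\leq C$, and the commutation identity $B_nR(\lambda,A_n)=R(\lambda,A_n)B_n$. For each fixed $n$, \cite{KH} already identifies $A_n+B_n$ as the generator of the integrated semigroup given by formula (\ref{eq tacka}); the task is to upgrade those conclusions to be uniform in $n$ so that membership in $\mathcal{A}^M_5$, $\mathcal{C}^M_{\rm exp}([0,\infty);\mathcal{L}(X))$ and the associations $\simeq$, $\simeq_0$ are respected.

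For item (1) I would verify (G1)--(G5) for $(A_n+B_n)_n$. Since $D_{A_n+B_n}=D$, (G1) is automatic. (G2), (G3) and (G5) follow from the Neumann expansion $R(\lambda,A_n+B_n)=\sum_{k\geq 0}R(\lambda,A_n)(B_nR(\lambda,A_n))^k$, which converges once $\|B_nR(\lambda,A_n)\|\leq 1/2$: using $\|B_n\|\leq C$ together with (G3) and (G5) for $A_n$, the threshold $\omega'$ and the geometric bounds are uniform in $n$. That $(S_n^{B_n})_n \in \mathcal{C}^M_{\rm exp}([0,\infty);\mathcal{L}(X))$ comes from substituting (\ref{eq tacka}) into the norm estimate and using $\|e^{tB_n}\|\leq e^{Ct}$ together with the exponential moderate bound of $(S_n)_n$; the identification of the generator of $(S_n^{B_n})_n$ is pointwise from \cite{KH}.

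For item (2), writing $C_n=\tilde B_n-B_n$, the second resolvent identity
\begin{equation*}
R(\lambda,A_n+\tilde B_n)-R(\lambda,A_n+B_n)=R(\lambda,A_n+\tilde B_n)\,C_n\,R(\lambda,A_n+B_n),
\end{equation*}
combined with (G5) from item (1) (applied to both $A_n+B_n$ and $A_n+\tilde B_n$, since $\tilde B_n$ still satisfies the standing hypotheses) and with $\|C_n\|\to 0$, yields (GE4); (GE1)--(GE3) are immediate as the domains coincide. For the semigroup association, the Duhamel identity
\begin{equation*}
e^{t\tilde B_n}-e^{tB_n}=\int_0^1 e^{t(1-s)\tilde B_n}(tC_n)e^{tsB_n}\,ds
\end{equation*}
substituted into $S_n^{\tilde B_n}(t)-S_n^{B_n}(t)$ via (\ref{eq tacka}) gives a bound of order $te^{Ct}\|C_n\|$ times a moderate factor, which, after absorbing $te^{Ct}$ into a slightly larger exponential, lands in $\mathcal{I}_{\rm exp}([0,\infty);X)$. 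Item (3) reduces to the preceding Theorem: $(A_n)_n\simeq_0(\tilde A_n)_n$ implies $(S_n)_n\simeq(\tilde S_n)_n$, and subtracting (\ref{eq tacka}) for $S_n$ and for $\tilde S_n$ gives
\begin{equation*}
S_n^{B_n}(t)-\tilde S_n^{B_n}(t)=e^{tB_n}(S_n(t)-\tilde S_n(t))-B_n\int_0^t e^{sB_n}(S_n(s)-\tilde S_n(s))\,ds,
\end{equation*}
to which a routine exponential estimate with rate $\omega''>C+\omega$ is applied.

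The main obstacle I anticipate is not any individual step but keeping every estimate uniform in $n$: the Neumann threshold in item (1), the exponential rates in items (2)--(3), and the distribution of $\lambda^b$ in item (2) must all be controlled by constants independent of $n$. The commutation hypothesis and the uniform norm bound on $(B_n)_n$ are precisely what enable this; without them the perturbed resolvents and semigroups would acquire $n$-dependent exponential rates that would break moderate growth.
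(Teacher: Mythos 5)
The paper offers no proof to compare against: it states that the proposition ``directly follows'' from \cite{KH}, Section 3, and omits the argument, so your outline is a reconstruction rather than an alternative route. Its architecture (pointwise Kellermann--Hieber plus uniformity in $n$) is the right one and item (3) is sound, but two steps do not close under the paper's actual hypotheses. In item (1), the Neumann series $\sum_{k}R(\lambda,A_n)(B_nR(\lambda,A_n))^k$ needs $\|B_nR(\lambda,A_n)\|_{\mathcal{L}(X)}\le 1/2$; the only $n$-uniform resolvent bound supplied is $c_2(\lambda)$ from (G4), which is not asserted to decay in $\lambda$, while (G3) and (G5) only give $\|R(\lambda,A_n)\|\le M_n\lambda^{-b}$ with $(M_n)_n$ moderate, so your smallness threshold is $\lambda>(2CM_n)^{1/b}$ and is $n$-dependent: there is no single $\omega'$ from this argument. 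The fix is the commutation hypothesis, which is what \eqref{eq tacka} encodes: since $B_n$ commutes with $S_n(t)$, Laplace transforming \eqref{eq tacka} gives $\lambda\int_0^\infty e^{-\lambda t}S_n^{B_n}(t)\,dt=R(\lambda-B_n,A_n)=R(\lambda,A_n+B_n)$ for all $\lambda>\omega_1+C$, where $R(\lambda-B_n,A_n)$ is defined by a contour integral over $|z|=2C$ (legitimate uniformly in $n$ because $\|B_n\|_{\mathcal{L}(\bar D)}\le C$); this yields (G2), (G3) and (G5) for $(A_n+B_n)_n$ with moderate constants and an $n$-independent abscissa, which the Neumann expansion does not.

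In item (2) your estimates terminate in products of the form $\tilde M_n\|C_n\|\,\|x_n\|_X$ (resolvent case) and $te^{Ct}\|C_n\|M_n'\|x_n\|_X$ (Duhamel case). The paper's definitions of $\simeq$, of (GE4) and of $\mathcal{I}_{\rm exp}([0,\infty);X)$ quantify over \emph{all} moderate $(x_n)_n\in\mathcal{E}^M_X$, and $\mathcal{I}_{\mathcal{L}(\bar D)}$ only guarantees $\|C_n\|\to 0$ at an unspecified rate, whereas $\tilde M_n$, $M_n'$ and $\|x_n\|_X$ may grow polynomially in $n$; hence these products need not tend to zero, and the assertion that the Duhamel bound ``lands in $\mathcal{I}_{\rm exp}([0,\infty);X)$'' is not justified as written. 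You should separate the factors that are genuinely $n$-uniform (the bound $\|e^{tB_n}\|\le e^{Ct}$, and $\|R(\lambda,A_n+B_n)\|\le c_2'(\lambda)$ coming from (G4)) from those that are only moderate, and either restrict the association to bounded $(x_n)_n$ or state explicitly that $\|C_n\|$ must decay fast enough to absorb the moderate growth. This difficulty is arguably inherited from the formulation of the proposition itself, but a proof must surface it rather than pass over it.
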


With this proposition we can  construct associated infinitesimal generators which produce associated g.e.i.s.'s.

For example, continuing with the example concerning the differential o\-pe\-ra\-tor given in the last part of Section \ref{examples}, let $X=L^p(\mathbb R), $ $p\in(1,\infty)$, $A=P(D)=a_0+a_1D+a_2D^2,$ $p(i\xi)=\sum_{j=0}^2 a_j(i\xi)^j$ and make perturbation so that $A_n= P_n (D)=a_0+1/n +a_1D+(a_2+1/n)D^2$ and $p_n(i\xi)=(a_0+1/n)+a_1(i\xi)+(a_2+1/n)(i\xi)^2.$

We have that $D(A)=D(A_n)=W^{2,2}(\mathbb R)$ and $A-A_n=(1+D^2)/n,$ $n\in\mathbb{N}.$ So these sequences are associated and 
\begin{align*}
S(t)f=\mathcal{F}^{-1}\left(\int_0^t e^{p(i\xi)s}ds\right)\ast f,\quad  S_n(t)f=\mathcal{F}^{-1}\left(\int_0^t e^{p_n(i\xi)s}ds\right)\ast f,
\end{align*}
$f\in X,\ n\in\mathbb{N},$ and we have that for given $f\in X$
\begin{align*}
\sup_{t>0}\|S_n (t)f-S(t)f\|_{X}\to 0,\quad n\to \infty.
\end{align*}

\section*{Acknowledgment}

The paper is supported by the following projects and grants: project F10 of the Serbian Academy of Sciences and Arts, project 142-451-2384 of the Provincial Secretariat for Higher Education and Scientific Research and projects 451-03-47/2023-01/200125 and 337-00-577/2021-09/46 of the Ministry of Education, Science and Technological Development of the Republic of Serbia.


\end{document}